\theoremstyle{plain}
\newtheorem{theorem}{Theorem}
\newtheorem{corollary}{Corollary}
\newtheorem{proposition}{Proposition}
\newtheorem{lemma}{Lemma}
\theoremstyle{definition}
\newtheorem{definition}{Definition}
\newtheorem{remark}{Remark}
\newtheorem{question}{Question}
\def\PP{\mathbb{P}^}
\date{}
\begin{document}

\title[]{On the $X$-rank with respect to linearly normal curves.}
\author{Edoardo Ballico and Alessandra Bernardi}
\address{Dept. of Mathematics\\ University of Trento\\38123 Povo (TN), Italy}
\address{CIRM -FBK \\ 38123 Povo (TN), Italy}
\email{ballico@science.unitn.it, bernardi@fbk.eu}
\thanks{The authors were partially supported by CIRM - FBK (TN - Italy), MIUR and GNSAGA of INdAM (Italy).}
\subjclass{14H45, 14N05, 14Q05, 14H50}
\keywords{Secant varieties, Tangential varieties, Rank, Linearly normal curves.}

\maketitle

\begin{quote}ABSTRACT: \emph{In this paper we study the $X$-rank of points with respect to smooth linearly normal curves $X\subset \PP n$ of genus $g$ and degree $n+g$. 
\\
We prove that, for such a curve $X$, under certain circumstances, the $X$-rank of a general point of $X$-border rank equal to $s$ is less or equal than $n+1-s$.
\\
In the particular case of $g=2$ we give a complete description of the $X$-rank if $n=3,4$; while if  $n\geq 5$ we study the $X$-rank of points belonging to the tangential variety of $X$.}
\end{quote}

\section*{Introduction}

Let $X\subset \PP n$ be an integral, smooth, non degenerate curve defined over an algebraically closed field $K$ of characteristic $0$.

The {\it $X$-rank} of a point $P\in \PP n$, that we will denote with $r_X(P)$, is the minimum positive integer $s\in \mathbb{N}$ of points $P_1, \ldots , P_s \in X$ such that 
\begin{equation}\label{Xrank}
P\in \langle P_1, \ldots , P_s\rangle \subset \PP n,
\end{equation}
where $\langle \ \  \rangle$ denote the linear span. 

The knowledge of the $X$-rank of an element $P\in \PP n$ with respect to a variety $X\subset \PP n$ is a theme of
great interest both in mathematics and in recent applications.  In particular in the literature a large space is devoted to computation of the $X$-rank of points $P\in \PP n$ with respect to projective varieties $X$ that parameterize certain classes of homogeneous polynomials and also particular kind of tensors (see e.g. \cite{bb}, \cite{bccl}, \cite{b},  \cite{bgi}, \cite{bl}, \cite{cgg}, \cite{cgg2} \cite{cs}, \cite{cglm}, \cite{co}, \cite{f}, \cite{lt}, \cite{ls}, \cite{s}, \cite{Sy}).

Actually, from a pure mathematical point of view, the notion
of $X$-rank of a point is preceded by the notion of $X$-border rank and that one of secant varieties.  
\\
The \emph{$s$-th secant variety} $\sigma_s(X)\subseteq \mathbb{P}^n$ of a projective variety $X\subset \PP n$ is defined as follows:
\begin{equation}\label{secant}
\sigma_s(X): =\overline{\bigcup_{P_1, \ldots , P_s\in X}\langle P_1, \ldots , P_s \rangle} \subseteq \PP n,
\end{equation}
where the closure is in terms of Zariski topology.
\\
Observe that $\sigma_1(X)=X$ and also that
$$X\subset \sigma_2(X) \subset \cdots \subset \sigma_{s-1}(X)\subset \sigma_s(X)\subseteq \PP n.$$ 
If $P\in \sigma_s(X)\setminus \sigma_{s-1}(X)$ is said to be of $X$-\emph{border rank} equal to $s$. Obviously the $X$-border rank of a point $P\in \PP n$ is less or equal than its $X$-rank.
\\
Since the set
\begin{equation}\label{sigma0}
\sigma_s^0(X):=\{P\in \PP n \; | \; r_X(P)\leq s\}
\end{equation}
is not a closed variety (except obviously when $s=1$  and when $\sigma_s(X)=\PP n$), it turns out that, in Algebraic Geometry, the notion of $X$-border rank is more natural than that one of $X$-rank, because it is not possible to find ideals and equations for $\sigma_s^0(X)$,  while there is a wide open research area interested in a description of ideals for $\sigma_s(X)\subsetneq \PP n$ (see e.g. \cite{ar}, \cite{cgg3}, \cite{lm}, \cite{lm2}, \cite{lw}, \cite{m}).

If $X\subset \PP n$ is a rational normal curve of degree $n$, the knowledge of $X$-rank of a point coincides both with that one  of ``symmetric rank"  of a two variables $n$-dimensional symmetric tensor, and with the knowledge of the so called ``Waring rank" of a two variables homogeneous polynomial of degree $n$ (see \cite{bgi}, \cite{cs}, \cite{cglm}  \cite{Sy}). In this case a complete description of the $X$-rank is given for any point $P\in \sigma_s(X)\setminus \sigma_{s-1}(X)$ and for any positive integer $s$. In particular the first description of this result is due to Sylvester (\cite{Sy}), then in \cite{cs} there is a reformulation of it in more modern terms. Recently \cite{cglm} and \cite{bgi} have given explicit algorithms for the computation of the $X$-rank with respect to a rational normal curve. What is proved in all those papers is that, with respect to a rational normal curve $X\subset \PP n$, the $X$-rank of a point $P\in \sigma_s(X)\setminus \sigma_{s-1}(X)$, for $2\le s\le \left\lceil \frac{n+1}{2}\right\rceil$, can only be either $s$ or $n-s+2$.

When one looks to the $X$-rank with respect to projective curves $X\subset \PP n$ of higher genus, the informations becomes immediately isolated. For example, if $X\subset \PP n$ is a genus 1 curve of degree $n+1$, the only result we are aware of, is about points belonging to tangent lines to $X$ (see \cite{bgi}, Theorem 3.13).

We introduce here the definition of the \emph{tangential variety} $\tau(X)\subset \PP n$ of a projective variety $X\subset \PP n$ as follows: 
\begin{equation}\label{tangential}
\tau(X):=\overline{\bigcup_{Q\in X_{\mathrm{reg}}}T_QX}.
\end{equation}
where $T_QX$ is the tangent space to $X$ at $Q$, and the closure is in terms of the Zariski topology. Observe that if $X\subset \PP n$ is smooth then $\tau(X)=\bigcup_{Q\in X_{\mathrm{reg}}}T_QX$.

If $X\subset \PP n$ is a smooth elliptic curve of degree $n+1$, the $X$-rank of the points $P\in \tau(X)$ is described in Theorem 3.13 of \cite{bgi}. The authors proved that, for all $Q\in X$, if $n=3$ then the elements $P\in T_QX\setminus \sigma_2^0(X)$ are such that $r_X(P)=3$, while if $n\ge 4$ then any $P\in T_QX\setminus \sigma_2^0(X)$ is such that $r_X(P)= n-1$.
\\
Clearly this result runs out the case of the $X$-rank of all points $P\in \sigma_2(X)$ when $X\subset \PP n$ is an elliptic curve of degree $n+1$, because $\sigma_2(X)=\sigma_2^0(X)\cup\tau(X)$, and any point $P\in \sigma_2^0(X)$ can only be of $X$-rank equal either to $1$ or to $2$ by definition (\ref{sigma0}). Anyway, on our knowledge, nothing is known on $r_X(P)$ with respect to elliptic curves $X$ if $P\notin \sigma_2(X)$. 
\\
\\
What we do in this paper is to treat the case of smooth and linearly normal curves $X\subset \PP n$ of genus $g$ and degree $n+g$ with a particular attention to the case of genus $2$ curves. 

\begin{definition}
A non-degenerate projective curve $X\subset \PP n$ is called \emph{linearly normal} if $H^1(\PP n , {\mathcal{I}}_{X}(1))=0$.
\end{definition}

From now on $X\subset \PP n$ will be a linearly normal non-degenerate projective curve of genus $g$ and degree $n+g$.
\\
\\
If a point $P\in\PP n$ belongs to a $\sigma_s^0(X)$, for certain value of $s\in \mathbb{N}$, then, by definition (\ref{sigma0}), there exists at least an effective reduced divisor $Z\subset X$ of degree less or equal than $s$ such that $P\in \langle Z \rangle$.  
\\
Otherwise if there exists an integer $s$ such that $P\in \sigma_s(X)\setminus\sigma_s^0(X)$, then (by Proposition 2.8 in \cite{bgi}) there  exists an effective non-reduced divisor $Z\subset X$ of degree $s$ such that $P\in \langle Z\rangle$, no other effective divisor of $X$ of degree strictly less then $s$ can contain  the point $P$ in its span, and the smallest degree of a reduced effective divisor $Z'\subset X$ such that $P\in \langle Z'\rangle$ has to be bigger than $s$. 
\\
A first way of investigation in order to compute the $X$-rank of a point $P\in \PP n$ for whom only its $X$-border rank is known, is to study the  $X$-rank of points belonging to some projective subspace $\langle Z\rangle\subset \PP n$ where $Z\subset X$ is an effective non-reduced divisor of $X$ and try to understand if there is a relation between the $X$-rank of $P\in \langle Z \rangle$ and the structure of $Z\subset X$.
\\
The result that we can give for this general case is Theorem \ref{a3} stated below (it will be proved in Section \ref{section1}). That theorem shows that if the $X$-border rank $s$ of a point $P\in \sigma_s(X) \subset \PP n$ does not exceed $\lceil \frac{n-2}{2}\rceil$ and $P$ belongs to the span of an effective non reduced divisor $Z\subset X$ such that $\deg(\langle Z \rangle \cap X)\leq \deg(X)-2p_a(X)$ then the $X$-rank of $P$ cannot be greater than $n+1-s$ (only one very particular embedding of $X$ in $\PP n$ is excluded form that result).

\begin{theorem}\label{a3} Let $X\subset \PP n$ be an integral non-degenerate and linearly normal curve. Let $Z\subset X_{reg}$ be a $0$-dimensional scheme such that $\dim (\langle Z \rangle)=s$ and $n\ge 2s+2$. Let $Z'\subset X$ be the Cartier divisor obtained by the schematic intersection $Z':=X\cap \langle Z\rangle$. Assume  $h^1(\PP n, \mathcal {I}_{Z'}(1))=0$ and $\deg(Z')\le \deg(X)-2p_a(X)$.
If $\deg (X) =2p_a(X)+\deg(Z')$ and $X$ admits a degree $2$ morphism $\phi : X\to \mathbb {P}^1$, then assume $\mathcal {O}_X(1)(-Z') \ne \phi ^\ast (\mathcal {O}_{\mathbb {P}^1}(p_a(X)))$. Then for a general $P\in \langle Z\rangle$ the $X$-rank of $P$ is:
$$r_X(P) \le n+1-s.$$ 
\end{theorem}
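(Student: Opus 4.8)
The plan is to realize $P\in\langle Z\rangle$ as lying in the span of a reduced divisor of controlled degree, and to estimate that degree by a Riemann--Roch / general-position argument. Concretely, set $d:=\deg(Z')$ and note that $Z'$ is a Cartier divisor on $X$ with $h^1(\mathcal I_{Z'}(1))=0$, so $h^0(\mathcal I_{Z'}(1))=n+1-(d+1)=n-d$ and the linear system of hyperplanes through $\langle Z'\rangle$ cuts out on $X$ exactly the complete linear system $|\mathcal O_X(1)(-Z')|$, a $g^{\,n-d-1}_{\,\deg X-d}$ which, by the hypothesis $d\le\deg X-2p_a(X)$, is nonspecial of degree $\ge 2p_a(X)$ (hence very ample, or at worst defines a degree-$2$ map to $\mathbb P^1$, which is precisely the excluded case). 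The key observation is that $P$ lies in $\langle Z'\rangle$ and $\dim\langle Z'\rangle=d-1<\dim\langle Z\rangle$ would be false—rather $Z\subseteq Z'$ and $\langle Z\rangle\subseteq\langle Z'\rangle$—so we work inside the linear span $\langle Z'\rangle\cong\mathbb P^{d-1}$, where $P$ sits.

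Next I would apply the apolarity / dimension-count argument for the $X$-rank of a general point of a given linear space: a general point of $\langle Z'\rangle$ has $X$-rank equal to $d$ precisely when $Z'$ imposes independent conditions appropriately, but to get the bound $n+1-s$ I instead project. The step I expect to carry the weight is: choose a general linear projection $\pi:\mathbb P^n\dashrightarrow\mathbb P^{n-d}$ from $\langle Z'\rangle$—no, better, choose hyperplane sections. The cleanest route: because $h^1(\mathcal I_{Z'}(1))=0$, a general hyperplane $H\supset\langle Z\rangle$ meets $X$ in $Z'$ plus a further reduced divisor $W$ of degree $\deg X-d$ in general position, and by the nonspeciality the residual points impose independent conditions; iterating, one builds a reduced divisor $D\subset X$ of degree $n+1-s$ with $P\in\langle D\rangle$. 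The count $n+1-s$ comes from: $P$ needs $s$ ``directions'' from $\langle Z\rangle$ already used up, and the remaining ambient dimension is $n-s$, which a general secant $(n-s)$-plane through $P$ meeting $X$ in a reduced divisor supplies, the divisor having degree $n+1-s$ by Bézout-type generality once $\mathcal O_X(1)(-Z')$ is nonspecial and not the pullback under the degree-$2$ map.

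The main obstacle is ensuring the existence of a \emph{reduced} divisor of the claimed degree through $P$: the naive secant plane through $P$ could be forced to be tangent to $X$ or to meet $X$ in a non-reduced scheme, which is exactly why the hypotheses $n\ge 2s+2$ (giving enough room) and the exclusion of $\mathcal O_X(1)(-Z')=\phi^\ast\mathcal O_{\mathbb P^1}(p_a(X))$ (the one case where the residual series is a pencil composed with an involution, forcing divisors to come in pairs and destroying reducedness) are present. So the heart of the proof will be a monodromy / uniform-position argument: the residual linear series $|\mathcal O_X(1)(-Z')|$ is either very ample (then general members are reduced and in general position, done) or it is the pencil $\phi^\ast|\mathcal O_{\mathbb P^1}(k)|$; in the latter case $\deg X-d=2k$ forces $d=\deg X-2p_a(X)$ and $k=p_a(X)$, which is the excluded configuration. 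Outside that, I would conclude by the standard trick: a general $P\in\langle Z\rangle$ lies on a reduced $X$-secant $(n-s)$-space, whose intersection with $X$ is reduced of degree $\le n+1-s$, giving $r_X(P)\le n+1-s$.
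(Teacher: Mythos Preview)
Your framework is the same as the paper's: pass to the residual series $R:=\mathcal O_X(1)(-Z')$, rule out the degenerate case where $\varphi_{|R|}$ factors through the hyperelliptic double cover, and then invoke uniform position for divisors in $|R|$. That part is fine. But there are two genuine gaps.

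First, you are confused about the relation between $Z$ and $Z'$. By definition $Z'=X\cap\langle Z\rangle$, so $\langle Z'\rangle\subseteq\langle Z\rangle$, not the other way around; combined with $Z\subseteq Z'$ this forces $\langle Z'\rangle=\langle Z\rangle$, and then $h^1(\mathcal I_{Z'}(1))=0$ gives $\deg Z'=s+1$, hence $Z'=Z$. This is the content of the paper's Lemma~\ref{l2}, and without it your parameter $d$ floats free of $s$, so the target bound $n+1-s$ never connects to anything you compute from $|R|$.

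Second, and more seriously, your last paragraph is not a proof but a restatement of the goal. The assertion ``a general $P\in\langle Z\rangle$ lies on a reduced $X$-secant $(n-s)$-space whose intersection with $X$ has degree $\le n+1-s$'' is exactly what has to be \emph{established}; calling it a ``standard trick'' does not make it one. A general $(n-s)$-plane through $P$ does not meet $X$ at all, and there is no a~priori reason a special one meeting $X$ should do so in only $n+1-s$ reduced points. The paper fills this gap by an explicit construction: for a general $B\in|R|$ and a suitable subset $S_B\subset B$, the span $\langle S_B\rangle$ meets $\langle Z\rangle$ in a single point $P_{B,S_B}$, which then visibly has $r_X(P_{B,S_B})\le n+1-s$. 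The real work is a density argument: one views the pairs $(B,S_B)$ as parametrizing a constructible family in a Grassmannian and shows, using $h^0(R)=n-s\ge s+2$ (this is where $n\ge 2s+2$ enters) and birationality of $\varphi_{|R|}$, that the union of the $\langle S_B\rangle$ contains $\sigma_{s+1}(X)\supseteq\langle Z\rangle$, so that the $P_{B,S_B}$ sweep out an open subset of $\langle Z\rangle$. Your sketch has the ingredients for the dichotomy on $\varphi_{|R|}$ but none of this construction or the covering argument, and without them the proof does not close.
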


Section \ref{section1} is almost entirely devoted to the proof of that theorem and to another result (Corollary \ref{c1}) on linearly normal curves $X\subset \PP n$ of genus $g\leq n-1$ where we give an immediate lower bound for the $X$-rank of points belonging to $\tau(X)\setminus \sigma_2^0(X)$ that will be useful in the sequel.

In Section \ref{section2} we will focus on non-degenerate linearly normal curves $X\subset \PP n$ of genus $2$ and degree $n+2$. We will first treat the cases $n=3,4$ (in subsections \ref{deg5} and \ref{deg6} respectively).
\\
If $n=3$ then $\sigma_2(X)=\PP 3$ (see \cite{a}), hence, the only meaningful case to study is that one of points in $\tau(X)\setminus \sigma_2^0(X)$ (if such a set is not empty). R. Piene in \cite{p} shown that it is possible to find a linearly normal embedding of $X$ in $\PP 3$ for which there exists $P\in \PP 3$ such that  $r_X(P)=3$. When such point exists it has to belong to $\tau(X)\setminus \sigma_2^0(X)$. In Proposition \ref{p2.0} we will give a geometric description of those points.
\\
If $n=4$ then $\sigma_3(X)=\PP 4$ (see \cite{a}). We will actually prove in Proposition \ref{grado6} that $\sigma_3^0(X)=\PP 4$. This will be proved by showing both that if $P\notin \sigma_2(X)$ and also if $P\in \tau(X)\setminus \sigma_2^0(X)$ than $r_X(P)=3$. We will also give in Proposition \ref{p3} a geometric description of the points $P\in \PP 4$ of $X$-rank equal to $3$.

Finally, in Subsection \ref{degn+2}, we will treat the case of a linearly normal genus $2$ curve in $\PP n$ of degree $n+2$ for $n\geq 5$ and we will prove the following theorem.

\begin{theorem}\label{a2}
Fix an integer $n \ge 8$. and let $X \subset \mathbb {P}^n$ be a linearly normal smooth curve of genus $2$ and degree $n+2$. Then the $X$-rank $r_{X}(P)$ of a point $P\in T_QX\backslash X$, for any $Q\in X$, is $r_X(P) = n-2$.
\end{theorem}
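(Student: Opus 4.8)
The strategy is to prove the two estimates $r_X(P)\ge n-2$ and $r_X(P)\le n-2$ separately: the former by a postulation argument on $X$, the latter by reducing to Theorem \ref{a3} applied with $s=3$.

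\emph{Lower bound.} Put $r:=r_X(P)$ and fix a reduced divisor $W\subset X$ with $\deg W=r$ and $P\in\langle W\rangle$; since $P\notin X$ we have $r\ge 2$. Because $P\in T_QX=\langle 2Q\rangle$ and $P\notin\langle W\cap 2Q\rangle$ (the scheme $W\cap 2Q$ is either empty or the reduced point $\{Q\}\ne P$), the span $\langle W\rangle\cap\langle 2Q\rangle$ strictly contains $\langle W\cap 2Q\rangle$; the standard lemma comparing the linear span of a union of subschemes with its postulation (see the analogous lemmas in \cite{bgi}) forces $h^1(\mathbb P^n,\mathcal I_{W\cup 2Q}(1))>0$. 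Now $W\cup 2Q$ is a Cartier divisor on the smooth curve $X$ of degree $\le r+2$, and linear normality ($h^1(\mathcal I_X(1))=0$) together with the restriction sequence gives $h^1(\mathbb P^n,\mathcal I_{W\cup 2Q}(1))\le h^1(X,\mathcal I_{W\cup 2Q,X}(1))$, where $\mathcal I_{W\cup 2Q,X}(1)$ is a line bundle of degree $(n+2)-\deg(W\cup 2Q)$. If $\deg(W\cup 2Q)\le n-1$ this degree is $\ge 3=2p_a(X)-1$, so that $h^1$ vanishes, a contradiction. Hence $n\le\deg(W\cup 2Q)\le r+2$, i.e. $r\ge n-2$. (Alternatively, the same computation with $r=2$ shows $r_X(P)\ge 3$, so $P\in\tau(X)\setminus\sigma_2^0(X)$ and one may quote Corollary \ref{c1}.)

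\emph{Upper bound.} Fix a general point $Q'\in X$ and set $Z:=2Q\cup 2Q'$, so $\deg Z=4$ and $P\in\langle 2Q\rangle\subseteq\langle Z\rangle$. As $\mathcal O_X(1)(-2Q-2Q')$ has degree $n-2\ge 3$, its $h^1$ vanishes, so $h^1(\mathbb P^n,\mathcal I_Z(1))=0$ and $\dim\langle Z\rangle=3=:s$; note $n\ge 8=2s+2$, which is exactly where this hypothesis is needed. Let $Z':=X\cap\langle Z\rangle$. It contains $2Q+2Q'$, hence $\langle Z'\rangle=\langle Z\rangle$, and the same Riemann--Roch bookkeeping on $X$ as in the lower bound shows that, for $n\ge 8$, the only degree compatible with $Z'$ spanning a $3$--plane is $\deg Z'=4$; therefore $Z'=2Q+2Q'$, $h^1(\mathcal I_{Z'}(1))=0$, and $\deg Z'=4\le n-2=\deg X-2p_a(X)$. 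The exceptional clause of Theorem \ref{a3} is vacuous since $\deg X=n+2\ne 8=2p_a(X)+\deg Z'$. Thus Theorem \ref{a3} applies and gives $r_X(P'')\le n+1-s=n-2$ for a general $P''\in\langle Z\rangle$.

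\emph{From a general point of $\langle Z\rangle$ to $P$.} The genuinely delicate step is that our $P$ sits on the line $T_QX=\langle 2Q\rangle$ contained in $\langle Z\rangle$, hence is never a general point of $\langle Z\rangle\cong\mathbb P^3$, so we must exhibit the $n-2$ points of $X$ whose span contains this specific $P$. I would treat this by keeping $P$ fixed and letting $Q'$ vary: by inspection of the proof of Theorem \ref{a3} — a dimension count on the incidence variety of $(n-2)$--tuples of points of $X$ whose $(n-3)$--plane span meets $\langle Z\rangle$ — the locus in $\langle Z\rangle$ where its conclusion can fail is a proper closed subset whose only $Q'$--independent component is contained in $X$ together with some small secant variety of $X$; since $P\notin X$ and $r_X(P)\ge n-2$, for a suitable choice of $Q'$ the point $P$ avoids this locus. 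Concretely this means rerunning the incidence argument with $Q'$ and the $(n-2)$--tuple as joint parameters, in the spirit of the elliptic-curve case treated in Theorem 3.13 of \cite{bgi}, and verifying that the resulting construction reaches every point of $T_QX\setminus X$; carrying out this last verification cleanly is the main obstacle, everything else being Riemann--Roch on $X$ and the span/postulation lemma.
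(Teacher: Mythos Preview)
Your lower bound is fine and is exactly Corollary \ref{c1} (your direct postulation argument is in fact the content of that corollary's proof).

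The upper bound, however, has a genuine gap, and you have correctly located it yourself. Theorem \ref{a3} only yields $r_X(P'')\le n-2$ for a \emph{general} $P''$ in the $3$--plane $\langle 2Q+2Q'\rangle$, and your point $P$ lies on the fixed line $T_QX$, which sits in that $3$--plane for every choice of $Q'$. Your proposed patch --- vary $Q'$ and argue that the ``bad locus'' sweeps away from $P$ --- is not justified: the proof of Theorem \ref{a3} does not identify the bad locus explicitly, and there is no reason why, for each $Q'$, the open set of good points in $\langle 2Q+2Q'\rangle$ should meet the codimension--$2$ line $T_QX$ at all, let alone at $P$. The assertion that the $Q'$--independent part of the bad locus is contained in $X$ union a small secant variety is unsupported. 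So the reduction to Theorem \ref{a3} does not close.

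The paper's argument avoids this difficulty entirely by a different construction tailored to the specific point $P$. One projects from $P$ itself: since $P\in T_QX\setminus X$, the image $Y=\ell_P(X)\subset\mathbb P^{n-1}$ is a degree $n+2$ curve with a single ordinary cusp at $O=\ell_P(Q)$ and hence $p_a(Y)=3$. Set $R:=\mathcal O_Y(1)\otimes\omega_Y^\ast$; then $\deg R=n-2\ge 6$, so $R$ is spanned and a general $B\in|R|$ is reduced with $O\notin B$. Lift $B$ to $S\subset X$ with $\sharp S=n-2$. Because $\mathcal O_Y(1)(-B)\cong\omega_Y$ has $h^1=1$, one gets $\dim\langle B\rangle=n-4$ in $\mathbb P^{n-1}$, hence $\dim\langle\{P\}\cup S\rangle=n-3$ in $\mathbb P^n$. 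On the other hand $\deg(\mathcal O_X(1)(-S))=4>\deg\omega_X$, so $S$ is linearly independent in $\mathbb P^n$ and $\dim\langle S\rangle=n-3$. Therefore $\langle S\rangle=\langle\{P\}\cup S\rangle$ and $P\in\langle S\rangle$, giving $r_X(P)\le n-2$ for the specific $P$, not merely a general one. The key idea you are missing is that the cusp raises the arithmetic genus by one, which makes $|\omega_Y|$ large enough to force the span of $B$ to drop by exactly the right amount.
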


If $n=5,6,7$ we can actually show that the set of points $\{P\in \tau(X) \; | \: r_X(P)=n-2\}$ is not empty,  but we can only prove that the $X$-rank of $P\in \tau(X)$ can be at most $n-1$ (see Proposition \ref{z1}).

From all these results we end up in Section \ref{questions} with some natural but open questions concerning the highest realization of the $X$-rank with respect to a linearly normal smooth genus $g$ curve $X\subset \PP n$ of degree $n+g$. More precisely, we expect that the maximum possible $X$-rank with respect to such a curve can be reached, at least for big values of $n$, by points on $\tau(X)$ (see questions \ref{u1} and \ref{q1}). We also expect that, when  $n\gg s$, the $X$-rank equal to $s$ cannot be realized out of $\sigma_{n-s}(X)$ (Question \ref{qq3}).

\section{The $X$-rank with respect to a linearly normal curve}\label{section1}

In this section we study the $X$-rank of projective points with respect to a smooth and linearly normal curve $X\subset \PP n$ of genus $g$ and degree $n+g$.
\\
First of all we give in Corollary \ref{c1} a lower bound for the $X$-rank of points belonging to $\tau(X)\setminus X$ if $X$ is embedded in a $\PP n$ with $n\geq g+1$.

\begin{lemma}\label{a1}
Let $X\subset \mathbb {P}^n$ be an integral and linearly normal curve and let $Z \subset X$ be a zero-dimensional subscheme such that $\deg (X)-\deg (Z) > 2p_a(X)-2$. Then $h^1(\mathbb {P}^n,\mathcal {I}_Z(1))=0$.
\end{lemma}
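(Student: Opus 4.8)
The plan is to translate the vanishing $h^1(\mathbb{P}^n,\mathcal{I}_Z(1))=0$ into a statement about the curve $X$ itself, using linear normality, and then to apply a standard positivity result on line bundles on curves. First I would recall that since $X$ is linearly normal and non-degenerate, the restriction map $H^0(\mathbb{P}^n,\mathcal{O}_{\mathbb{P}^n}(1))\to H^0(X,\mathcal{O}_X(1))$ is an isomorphism onto an $(n+1)$-dimensional space, and the line bundle $\mathcal{O}_X(1)$ has degree $\deg(X)$. The condition $h^1(\mathbb{P}^n,\mathcal{I}_Z(1))=0$ is equivalent to the surjectivity of the restriction $H^0(\mathbb{P}^n,\mathcal{O}_{\mathbb{P}^n}(1))\to H^0(Z,\mathcal{O}_Z)$, i.e. the hyperplanes of $\mathbb{P}^n$ impose independent conditions on the length-$\deg(Z)$ scheme $Z$.

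The key step is then to reduce this to the curve: from the exact sequence $0\to\mathcal{I}_X(1)\to\mathcal{I}_{Z,\mathbb{P}^n}(1)\to\mathcal{I}_{Z,X}(1)\to 0$, and the fact that $H^1(\mathbb{P}^n,\mathcal{I}_X(1))=0$ (linear normality) and $H^2(\mathbb{P}^n,\mathcal{I}_X(1))$ does not interfere because $H^1(\mathbb{P}^n,\mathcal{O}_{\mathbb{P}^n}(1))=0$, one gets that $h^1(\mathbb{P}^n,\mathcal{I}_{Z,\mathbb{P}^n}(1))=h^1(X,\mathcal{I}_{Z,X}(1))=h^1(X,\mathcal{O}_X(1)(-Z))$, where $\mathcal{O}_X(1)(-Z)$ denotes the twist of $\mathcal{O}_X(1)$ by the ideal sheaf of $Z$ in $X$ (a sheaf of the appropriate degree $\deg(X)-\deg(Z)$ when $Z$ is Cartier; in general one works with $\mathcal{I}_{Z,X}\otimes\mathcal{O}_X(1)$ directly). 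So it suffices to show $h^1(X,\mathcal{I}_{Z,X}(1))=0$. By Serre duality on the Gorenstein curve $X$ (or, after normalizing, by a direct degree count), this $h^1$ vanishes once the degree of $\mathcal{I}_{Z,X}\otimes\mathcal{O}_X(1)$ exceeds $2p_a(X)-2$: this is exactly the hypothesis $\deg(X)-\deg(Z)>2p_a(X)-2$, since $\deg(\mathcal{I}_{Z,X}\otimes\mathcal{O}_X(1))=\deg(X)-\deg(Z)$.

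The main obstacle is handling the case where $X$ is not smooth (only integral) and $Z$ is not a Cartier divisor, so that $\mathcal{I}_{Z,X}\otimes\mathcal{O}_X(1)$ is merely a rank-one torsion-free sheaf rather than a line bundle. In that situation I would pass to the normalization $\nu:\tilde X\to X$, or equivalently use the fact that for a rank-one torsion-free sheaf $\mathcal{F}$ on an integral projective Gorenstein curve one still has $\chi(\mathcal{F})=\deg(\mathcal{F})+1-p_a(X)$ and $h^1(\mathcal{F})=h^0(\omega_X\otimes\mathcal{F}^\vee)$; a nonzero section of $\omega_X\otimes\mathcal{F}^\vee$ would force $\deg(\mathcal{F})\le\deg(\omega_X)=2p_a(X)-2$, contradicting the hypothesis. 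Apart from this sheaf-theoretic bookkeeping, the argument is a routine Riemann–Roch/vanishing computation; the genuinely load-bearing input is linear normality, which is what lets us replace the ambient $h^1$ by the intrinsic one on $X$.
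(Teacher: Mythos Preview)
Your proposal is correct and follows essentially the same approach as the paper: reduce the ambient vanishing $h^1(\mathbb{P}^n,\mathcal{I}_Z(1))=0$ to the intrinsic vanishing $h^1(X,\mathcal{I}_{Z,X}(1))=0$ via linear normality, and then obtain the latter from the degree hypothesis by comparison with $\deg(\omega_X)=2p_a(X)-2$. The paper's proof is a two-line sketch of exactly this; you have simply supplied the underlying exact sequence and the Serre-duality/torsion-free-sheaf details (your parenthetical about $H^2(\mathcal{I}_X(1))$ is slightly misstated---what you actually need is only the injectivity of $H^1(\mathcal{I}_Z(1))\to H^1(\mathcal{I}_{Z,X}(1))$, which follows from $H^1(\mathcal{I}_X(1))=0$---but this does not affect the argument).
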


\begin{proof}
The degree of the canonical sheaf  $\omega_{X}$ is $2p_a(X)-2$, even if $X$ is not locally free. Hence for degree reasons we have $h^1(X,\mathcal {I}_{Z,X}(1))=0$  Since $X$ is linearly normal, then  $h^1(\mathbb {P}^n,\mathcal {I}_Z(1))=0$.
\end{proof}

\begin{corollary}\label{c1} 
Let $X\subset \PP n$ be an integral, non degenerate, linearly normal and smooth curve of genus $g$ and degree $n+g$ with $n\ge g+1$. Then, for any regular point $Q\in X$,  the $X$-rank of a point $P\in T_{Q}X\setminus X$ is:
$$r_X(P)\geq n-g.$$
\end{corollary}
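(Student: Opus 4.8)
The plan is to bound the $X$-rank from below by relating a linear-algebra condition on points of $T_QX$ to a cohomological condition on a divisor of $X$. Fix a regular point $Q \in X$ and a point $P \in T_QX \setminus X$. Suppose, for contradiction, that $r_X(P) = s$ with $s \le n-g-1$, so there is a reduced effective divisor $W \subset X$ of degree $s$ with $P \in \langle W \rangle$ and $P \notin \langle W' \rangle$ for any $W' \subsetneq W$. Then consider the zero-dimensional scheme $Z := W \cup (2Q)$, where $2Q$ denotes the degree-$2$ divisor supported at $Q$ cut out along $X$ (the length-$2$ jet, whose span is $T_QX$). Since $P \in \langle W\rangle \cap T_QX$, the span $\langle Z\rangle$ must have dimension strictly less than expected: the subspaces $\langle W\rangle$ (dimension $s-1$ if $W$ imposes independent conditions) and $T_QX$ (a line) meet at $P$, so $\dim \langle Z\rangle \le s - 1$ rather than $s$ (or $s+1$). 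Concretely, $Z$ has degree $s+2$ but $h^0(\mathcal{I}_Z(1)) > n+1 - (s+2)$, i.e. $h^1(\mathbb{P}^n, \mathcal{I}_Z(1)) > 0$.

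The key step is then to contradict this using Lemma \ref{a1}. The scheme $Z$ has degree $\deg(Z) \le s + 2$. We want $\deg(X) - \deg(Z) > 2p_a(X) - 2$, i.e. $(n+g) - (s+2) > 2g - 2$, i.e. $s < n - g$, which is exactly our assumption $s \le n - g - 1$. Hence Lemma \ref{a1} forces $h^1(\mathbb{P}^n, \mathcal{I}_Z(1)) = 0$, a contradiction. One subtlety to handle carefully is whether $Q$ may lie in the support of $W$: if $Q \in W$, then $Z = W \cup (2Q)$ has degree $s+1$ rather than $s+2$, but then the numerical inequality is even easier, and the failure-of-independence argument still applies since $P \in T_QX$ and $P$ lies on the span of the reduced points of $W$, so some nontrivial linear relation among the points of $Z$ persists. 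The truly essential point to verify rigorously is the claim $h^1(\mathcal{I}_Z(1)) > 0$: one must check that the condition $P \in \langle W \rangle \cap T_QX$ with $P \notin X$ and $W$ minimal genuinely produces a dependency, i.e. that $\langle W \rangle$ and $T_QX$ are not "accidentally" in general position through $P$ in a way that keeps $\dim\langle Z\rangle$ at its generic value; this follows because a point in the intersection of two linear spaces of dimensions $a$ and $b$ inside $\mathbb{P}^n$ forces their join to have dimension $\le a + b$, strictly less than the generic $a + b + 1$.

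The main obstacle I anticipate is purely bookkeeping: making the degree count and the independence-of-conditions argument uniform across the cases $Q \in \mathrm{Supp}(W)$ versus $Q \notin \mathrm{Supp}(W)$, and checking the edge case $s = 1$ (where $P \in X$ is excluded by hypothesis, so $s \ge 2$ automatically, and $P \in T_QX$ with $r_X(P) = 1$ would force $P \in X \cap T_QX$, impossible for $P \notin X$). Once the scheme $Z$ is set up so that the hypothesis of Lemma \ref{a1} is met precisely when $s \le n - g - 1$, the contradiction is immediate, so no hard geometry beyond the linear-span dimension inequality and the degree bound on $\omega_X$ is needed.
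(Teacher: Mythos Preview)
Your argument is correct. The key observation---that if $P\in\langle W\rangle\cap T_QX$ with $\sharp(W)=s$, then the scheme $Z=W\cup 2Q$ fails to impose independent conditions on hyperplanes---is exactly right, and the numerics work out: $\deg(Z)\le s+2$, so Lemma~\ref{a1} applies precisely when $(n+g)-(s+2)>2g-2$, i.e.\ $s<n-g$, yielding the contradiction $h^1(\mathcal{I}_Z(1))=0$ against $\dim\langle Z\rangle\le s<\deg(Z)-1$. Your case split on whether $Q\in\mathrm{Supp}(W)$ is also handled correctly: when $Q\in W$ you in fact get $T_QX=\langle P,Q\rangle\subset\langle W\rangle$ (since $P\ne Q$), so $\langle Z\rangle=\langle W\rangle$ has dimension $\le s-1<s=\deg(Z)-1$, and the degree bound for Lemma~\ref{a1} is only easier.

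This is a different route from the paper's. The paper argues via the linear system of hyperplanes through $T_QX$ and the residual linear series $|\mathcal{O}_X(1)(-2Q)|$ on $X$, rather than by directly forming $W\cup 2Q$ and invoking Lemma~\ref{a1}. Your approach is more transparent: it reduces the lower bound to a single degree inequality, makes the appearance of the threshold $n-g$ completely explicit (it is exactly where $\deg(W)+2$ meets $\deg(X)-(2g-2)$), and avoids any computation of dimensions of linear series beyond the statement of Lemma~\ref{a1}. The paper's argument, as written, is harder to parse and contains what look like miscomputed degrees in the linear series; your version sidesteps this entirely.
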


\begin{proof} 
By Lemma \ref{a1} we have that $h^1(\mathbb {P}^n,\mathcal {I}_{\langle 2Q \rangle}(1))=0$ hence $h^0(\mathbb {P}^n,\mathcal {I}_{\langle 2Q \rangle}(1))=n-1$, therefore any hyperplane $H$  containing $T_{Q}X$ cuts on $X$ a divisor $D_{H}$ of degree $n$ having $2Q$ as a fixed part, i.e. for any hyperplane $H$ containing $T_QX$ there exists a divisor $D_{H}'$ on X of degree $n-g$ such that $D_{H}=2Q+D_{H}'$. Now the $D_{H}'$'s give a linear serie  $g^{n-g+1}_{n-g}$ on $X$. This implies the existence of a hyperplane $\tilde{H}$ such that $D_{\tilde{H}}=2Q+D_{\tilde{H}}'$ and the divisor $D_{\tilde{H}}'$ belonging to $ g^{n-g+1}_{n-g}$ spans a $\PP {n-g+1}$ containing $P$. Therefore $r_{X}(P)\geq n-g$.
\end{proof}

Before proving Theorem \ref{a3} stated in the Introduction, we need the following Lemma.

\begin{lemma}\label{l2} Let $X\subset \PP n$ be an integral non-degenerate and linearly normal curve. Let $Z\subset X_{reg}$ be a $0$-dimensional scheme such that $\dim (\langle Z \rangle)=s$ and $n\ge 2s+2$. Let $Z'\subset X$ be the Cartier divisor obtained by the schematic intersection $Z':=X\cap \langle Z\rangle$. If $h^1(\PP n, \mathcal {I}_{Z'}(1))=0$ and $\deg(Z')\le \deg(X)-2p_a(X)$, then $Z'=Z$. 
\end{lemma}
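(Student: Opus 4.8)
The plan is to show $Z' = Z$ by a dimension count comparing the span of $Z$ inside $\langle Z \rangle = \mathbb{P}^s$ against the span of the potentially larger divisor $Z'$. The key observation is that $Z \subseteq Z'$ as schemes (since $Z \subseteq X$ and $Z \subseteq \langle Z \rangle$, so $Z \subseteq X \cap \langle Z \rangle = Z'$), and hence $\langle Z \rangle \subseteq \langle Z' \rangle$. So it suffices to prove $\dim \langle Z' \rangle = s$, which combined with $\langle Z \rangle \subseteq \langle Z' \rangle$ and $\dim \langle Z \rangle = s$ forces $\langle Z \rangle = \langle Z' \rangle$; then $Z' = X \cap \langle Z' \rangle \supseteq Z'$ trivially, but more to the point, one argues the reverse containment $Z' \subseteq Z$ using that $Z'$ is contained in $\langle Z \rangle$ and spans a space of the same dimension as $Z$ does, and both are subschemes of the curve $X$.

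First I would use the hypothesis $h^1(\mathbb{P}^n, \mathcal{I}_{Z'}(1)) = 0$ to compute $\dim \langle Z' \rangle = \deg(Z') - 1$ exactly (this is the standard fact that vanishing $h^1$ of the twisted ideal sheaf means the points impose independent conditions on hyperplanes, so the span has the expected dimension). Similarly, since $Z \subseteq Z'$, we get $h^1(\mathbb{P}^n, \mathcal{I}_Z(1)) = 0$ as well — indeed one can either invoke Lemma \ref{a1} via $\deg(X) - \deg(Z) \ge \deg(X) - \deg(Z') \ge 2p_a(X) > 2p_a(X) - 2$, or simply note that a subscheme of a scheme imposing independent conditions also imposes independent conditions — so $\dim \langle Z \rangle = \deg(Z) - 1 = s$, giving $\deg(Z) = s+1$. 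Now the chain of containments $Z \subseteq Z' \subseteq X \cap \langle Z \rangle$ shows $\langle Z' \rangle \subseteq \langle Z \rangle = \mathbb{P}^s$, so $\deg(Z') - 1 = \dim \langle Z' \rangle \le s = \deg(Z) - 1$, whence $\deg(Z') \le \deg(Z)$. Combined with $Z \subseteq Z'$ and both being finite schemes, equality of degrees forces $Z = Z'$.

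The main point to be careful about — and what I expect to be the only real subtlety — is the step extracting $\dim \langle Z' \rangle = \deg(Z') - 1$ from $h^1 = 0$ when $Z'$ may be non-reduced: one must phrase this correctly via $h^0(\mathbb{P}^n, \mathcal{I}_{Z'}(1)) = n+1 - \deg(Z')$, i.e. the linear system of hyperplanes through $Z'$ has the expected (co)dimension, and then $\langle Z' \rangle$ is the intersection of those hyperplanes, of dimension $n - (n+1-\deg(Z')) = \deg(Z')-1$. The hypothesis $n \ge 2s+2$ is not strictly needed for this lemma's conclusion $Z = Z'$ itself; it is inherited from Theorem \ref{a3} and guarantees $\deg(Z') = \deg(Z) = s+1 \le n - s - 1$, so that the relevant sheaves have the cohomology claimed and everything fits inside $\mathbb{P}^n$ with room to spare. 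I would keep the proof to roughly these four moves: (i) $Z \subseteq Z'$ hence $\langle Z \rangle \subseteq \langle Z' \rangle$; (ii) $h^1(\mathcal{I}_{Z'}(1)) = 0 \Rightarrow \dim\langle Z'\rangle = \deg(Z')-1$ and likewise $\dim\langle Z\rangle = \deg(Z)-1 = s$; (iii) $Z' \subseteq \langle Z \rangle$ gives $\langle Z' \rangle \subseteq \langle Z \rangle$, so $\deg(Z') \le \deg(Z)$; (iv) conclude $Z = Z'$ from the degree equality and the inclusion.
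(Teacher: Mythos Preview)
Your proof is correct and follows essentially the same route as the paper: both arguments use $\langle Z\rangle=\langle Z'\rangle$ (from $Z\subseteq Z'\subseteq\langle Z\rangle$) together with $h^1(\mathcal I_{Z'}(1))=0$ to obtain $\deg(Z')=s+1$, then conclude $Z=Z'$ from the inclusion $Z\subseteq Z'$ and equality of degrees. Your write-up is in fact more explicit than the paper's at the final step, since you spell out why $\deg(Z)=s+1$ as well (via $h^1(\mathcal I_Z(1))=0$ for the subscheme $Z\subseteq Z'$), whereas the paper leaves this implicit; your remark that the hypothesis $n\ge 2s+2$ is not actually used in the lemma is also correct.
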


\begin{proof}
The hypothesis on the degree of $X\subset \PP n$, i.e. $\deg (X) \ge 2p_a(X)-1$, implies the vanishing of $h^1(X,\mathcal {O}_X(1))$. Now $X$ is linearly normal, then $\deg (X) = n+p_a(X)$. Moreover $h^1(\mathbb {P}^n,\mathcal {I}_{Z'}(1))=0$ and $\langle Z'\rangle$ is a projective subspace of dimension $s$, then $\deg (Z')=s+1$. Now, since $\langle Z\rangle  = \langle Z' \rangle$ by hypothesis, then $Z' = Z$. 
\end{proof}

We are now ready to give the proof of Theorem \ref{a3} that will allow to study the $X$-rank for points $P\in \sigma_s(X)\setminus \sigma_s^0(X)$ if the dimension of the ambient space is greater or equal than $2s-2$ and if $P\in \langle Z \rangle$ where $Z\subset X$ is an effective divisor such that $h^1(\PP n, {\mathcal{I}}_{\langle Z \rangle \cap X})=0$ and $\deg(\langle Z \rangle \cap X)\leq \deg(X)-2p_a(X)$ (only one linearly normal embedding of $X$ in $\PP n$ is excluded from this theorem). We stress that the two conditions required for the divisor $Z'=\langle Z \rangle \cap X$  are used to ensure that $Z'=Z$ as it is shown in Lemma \ref{l2} (i.e. that $\langle Z \rangle$ does not intersect $X$ in other points than those cut by $Z$ itself). We notice moreover that if $n\leq 2s-1$ then $\sigma_s(X)=\PP n$ (in fact, by \cite{a}, the dimension of the $s$-th secant variety to a smooth, reduced, non degenerate curve $X\subset \PP n$ is $2s-1$), hence the hypothesis $n\geq 2s-2$ excludes only the case of $\sigma_s(X)=\PP n$.
\\
\\
{\emph{Proof of Theorem \ref{a3}.}}
\\
\\
By Lemma \ref{l2}, the scheme $Z'$ coincides with $Z$.
Now $Z'$ is the base locus of the linear system induced on $X$ by the set of all hyperplanes containing $Z$. By  hypothesis, $Z$ is also a Cartier divisor and $\deg (X) \ge 2p_a(X)+\deg (Z)$, then the line bundle 
\begin{equation}\label{R}
 R:= \mathcal {I}_X(1)(-Z)
\end{equation}
is spanned.  Therefore, if with $R_0\in |R|$ we denote the general zero-locus of R, then $R_0$ is reduced and contained in $X_{reg}\backslash (Z)_{red}$. Since $h^1(\mathcal {I}_{Z}(1))=0$, then the restriction map $H^0(\mathbb {P}^n,\mathcal {I}_{Z}(1)) \to H^0(X,\mathcal {O}_X(1)(-Z))$ is surjective. Hence $R_0\cup Z$ is the scheme-theoretic intersection of $X$ with a hyperplane $H\subset \PP n$ and containing $\langle Z\rangle$. 
\\
Let $\varphi_{|R|} : X \to \mathbb {P}^{n-s-1}$ be the morphism induced by the complete linear system $\vert R\vert$. Clearly, by the definition (\ref{R}), the degree of $R$ is $\deg (R) = \deg (\varphi_{|R|} )\cdot \deg (\varphi_{|R|} (X))$. Moreover the image of $X$ via $\varphi_{|R|}$ spans  $\mathbb {P}^{n-s-1}$, then $\deg (R) \ge \deg (\varphi_{|R|} )\cdot (n-s-1)$ and equality holds if and only if $\varphi_{|R|} (X)\subset \PP {n-s-1}$ is a rational normal curve. Hence our numerical assumptions give that either $\deg (\varphi_{|R|} )=1$ or $\deg (X) =2p_a(X)+\deg(Z)$, $\deg (\varphi_{|R|})=2$, $\varphi_{|R|}(X)\subset \PP {n-s-1}$ is a rational normal curve and $R \cong \psi ^\ast (\mathcal {O}_{\mathbb {P}^1}(p_a(X)))$. Now, the latter case is excluded by hypothesis, therefore $\varphi_{|R|}$ is birational onto its image. 
\\
Consider a subset $\Psi$ defined as follows:
\begin{equation}\label{Psi}
\Psi:=\{B\in \vert R\vert\; | \; \varphi_{|R|}\vert B \hbox{ is } 1-1, \, \forall \, S_B\subset \varphi_{|R|}(B) \hbox{ with } \sharp (S_B) \le n-s-1, \, \langle S_B \rangle = \PP {n-s-2}\}.
\end{equation}
By a monodromy argument such a $\Psi\subset |R|$ exists, is non-empty and open.
\\
For any $B\in \Psi$ and for any set of points $S_B\subset \varphi_{|R|}(B)$, we have that $\varphi_{|R|} (S_B)\subset \PP {n-s-1}$ itself is not linearly independent, but any proper subset of it is linearly independent, then, for any $B\in \Psi$, we have that $\langle Z\rangle\cap \langle S_B \rangle$ is made by only one point and we denote it with $P_{B,S_B}$:
\begin{equation}\label{PBS}
P_{B,S_B}:=\langle Z\rangle\cap \langle S_B \rangle .
\end{equation}
Obviously $r_X(P_{B,S_B}) \le n-s+1$ because $P_{B,S_B}\in \langle S_B \rangle$ and $S_B\subset \varphi_{|R|}(B)$ for some $B\in \Psi$. 
\\ 
Now to conclude the proof it is sufficient to show that, varying $B\in \Psi$ and $S_B\subset \varphi_{|R|}(B)$, the set of all points $P_{B,S_B}$ obtained as in (\ref{PBS}) covers a non-empty open subset of $\langle Z\rangle$.

Let $G(n-s,n)$ denote the Grassmannian of all $(n-s)$-dimensional projective linear subspaces of $\mathbb {P}^n$. Now the set $\Psi$ defined in (\ref{Psi}) can be viewed as an irreducible component of maximal dimension of the constructible subset of $G(n-s,n)$ which parametrizes all linear spaces  $\langle S_B\rangle$ with $(B,S_B)$ as in (\ref{Psi}) (whit an abuse of notation we will write $(B,S_B)\in\Psi$ when we think $\Psi \subset G(n-s,n)$). 
\\
With $\overline{\Psi}$ we denote the closure of $\Psi$ in $G(n-s,n)$. 
\\
Since $h^0(X,R) =n-s\ge s+3$ and $g(X)>0$, there is an element in $\vert R\vert$ that contains $s+1$ general points of $X$ (remind that  $(n-s-2)$  general points on an integral curve $Y \subset \mathbb {P}^{n-s-1}$ are contained in a linear space that has $(n-s-1)$ on $Y$, if and only if  $Y$ is not the rational normal curve; in our case $g(Y)>0$ and $\vert R\vert$ induces a birational map). 
Hence the closure $\Gamma$ in $\mathbb {P}^n$ of the union of all $\langle B\rangle$ with $B\in \overline{\Psi}$ contains $\sigma_{s+1}(X)$. Therefore such a $\Gamma$ clearly contains $\langle Z\rangle$. Then for every $P\in \langle Z\rangle$ there is $B\in \overline{\Psi}$ such that $P\in \langle B\rangle$.
To prove the theorem it is sufficient to prove that for a general $P$  in $\langle Z\rangle$ we may take $B\in \Psi$ with $P\in \langle B\rangle$. Since $\Psi$ contains a non-empty open subset of $\overline{\Psi}$, it is sufficient to find at least one $P\in \langle Z\rangle$ that actually belongs to an element $B\in \Psi$.
It is just sufficient to take, for such required $P$, the element $P_{B,S_B}\in \langle Z\rangle\cap \langle S_B\rangle$ defined in (\ref{PBS}) where $(B,S_B)\in \Psi\subset G(n-s,n)$, in fact we saw that for every $\langle S_B\rangle \in \Psi$ with $(B,S_B)\in \Psi\subset G(n-s,n)$ as above  $\langle Z\rangle\cap \langle S_B\rangle$ is a unique point $P_{B,S_B}$ of $\langle Z\rangle$. \qed

\begin{remark}\label{a5}
Notice that the proof of Theorem \ref{a3} works also if $X$ is smooth, of genus $g\ge 2$,  embedded in $\mathbb {P}^n$ by a degree $n+g$ line bundle and if $Z\subset X$ is an effective non-reduced degree $s$ divisor such that $n $ is greater or equal both than $2s+2$ and than $ 2g+s+1$.
\end{remark}

\begin{corollary}
Let $X\subset \PP n$ be a smooth linearly normal curve of genus $g\geq 2$ and degree $n+g$. Let $s\in \mathbb{N}$ be such that $n\geq 2s+2,2g+s+1$ and $P\in \sigma_s(X)$. Then $r_X(P)\leq n+1-s$.
\end{corollary}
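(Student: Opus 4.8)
The plan is to obtain the corollary as a direct consequence of Theorem~\ref{a3}, in the form of Remark~\ref{a5}: once $P$ is seen to lie (generically) in the span of a non-reduced divisor $Z$ of degree $s$ satisfying the hypotheses of that remark, the bound is immediate.

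First I would dispose of the trivial case: if $r_X(P)\le s$ then, since $n\ge 2s+2$ forces $s\le n-s-2<n+1-s$, we are done. So assume $r_X(P)>s$; then $P\notin\sigma_s^0(X)$, and I read the statement — in line with the abstract — as concerning a point whose $X$-border rank is exactly $s$. By Proposition~2.8 of \cite{bgi}, such a $P$ lies in the span $\langle Z\rangle$ of a unique effective, non-reduced divisor $Z\subset X_{\mathrm{reg}}$ of degree $s$, no divisor of smaller degree having $P$ in its span; and for the corresponding general $Z$ the point $P$ can be taken general in $\langle Z\rangle$, which is exactly what Remark~\ref{a5} requires.

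It then remains to check that this $Z$ meets the other hypotheses of Remark~\ref{a5}. The numerical conditions $n\ge 2s+2$ and $n\ge 2g+s+1$ are assumed. Since $X$ is linearly normal of genus $g$ and degree $n+g$, the inequality $n\ge 2g+s+1$ gives $\deg X-\deg Z=(n+g)-s>2g-2$, so Lemma~\ref{a1} yields $h^1(\PP n,\mathcal I_Z(1))=0$; the same numerical input also forces $Z'=X\cap\langle Z\rangle$ to equal $Z$ — as in Lemma~\ref{l2}, by comparing $h^1(\mathcal I_{Z'}(1))$ with $h^1(\mathcal O_X(1)(-Z'))$ — and gives $\deg Z'=s\le n-g=\deg X-2p_a(X)$. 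Finally, the exceptional ``hyperelliptic'' alternative excluded in Theorem~\ref{a3} cannot occur, since it would require $\deg X=2p_a(X)+\deg Z'$, i.e.\ $n=g+s$, contradicting $n\ge 2g+s+1$. Remark~\ref{a5} then gives the bound $r_X(P)\le n+1-s$.

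I expect the main difficulty here to be conceptual rather than computational. On the one hand, the bound $n+1-s$ cannot hold for every point of $\sigma_s(X)$: for instance, by Theorem~\ref{a2} the tangential points of a genus-$2$ curve have $X$-rank $n-2$, which already exceeds $n+1-s$ for $s\ge 4$; so the statement must be understood for a general point of $X$-border rank $s$ — equivalently, for $P$ general in the span of the divisor $Z$ above — and some care is needed in making this precise. On the other hand, checking that the divisor $Z$ furnished by Proposition~2.8 of \cite{bgi} genuinely triggers the non-exceptional case of Theorem~\ref{a3} (in particular that $\langle Z\rangle\cap X=Z$) is routine but relies essentially on the numerical hypotheses, via Lemma~\ref{a1}. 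Granting these, the proof is a short combination of Lemma~\ref{a1}, Lemma~\ref{l2} and Remark~\ref{a5}.
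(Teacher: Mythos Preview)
Your approach is exactly the paper's: split into $P\in\sigma_s^0(X)$ (trivial) and $P\in\sigma_s(X)\setminus\sigma_s^0(X)$, invoke Proposition~2.8 of \cite{bgi} to produce a non-reduced degree-$s$ divisor $Z$ with $P\in\langle Z\rangle$, and then appeal to Remark~\ref{a5} (hence Theorem~\ref{a3}). The paper's proof is terser only because it omits the verifications you supply (that the numerical hypotheses of Remark~\ref{a5} hold and that the hyperelliptic exception of Theorem~\ref{a3} cannot occur).

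The concern you raise in your final paragraph is well founded, and it applies equally to the paper's own argument. Theorem~\ref{a3} bounds $r_X(P)$ only for a \emph{general} $P\in\langle Z\rangle$, yet the paper --- like you --- applies it to the specific $P$ at hand. Your counterexample is correct: for $g=2$, $n=10$, $s=4$ the hypotheses $n\ge 2s+2$ and $n\ge 2g+s+1$ hold, a point $P\in T_QX\setminus X$ lies in $\sigma_4(X)\setminus\sigma_4^0(X)$ (border rank $2$, $X$-rank $n-2=8$ by Theorem~\ref{a2}), but $n+1-s=7<8$. So the corollary is literally false as stated; the abstract's wording (``a general point of $X$-border rank equal to $s$'') is what the method actually delivers, and both the statement and the invocation of Proposition~2.8 of \cite{bgi} should be read with that restriction.
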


\begin{proof}
If $P\in \sigma_s^0(X)$, then, by definition (\ref{sigma0}), $r_X(P)\leq s$, and, by hypothesis on $n$, $s<n+1-s$. 
\\
If $P\in \sigma_s(X)\setminus \sigma_s^0(X)$, then (by \cite{bgi}, Proposition 2.8) there exists an effective non-reduced divisor $Z\subset X$ of degree $s$ such that $P\in \langle Z \rangle$. Such divisor $Z$ satisfies the hypothesis of Remark \ref{a5}, and then those of Theorem \ref{a3}, therefore $r_X(P)\leq n+1-s$.
\end{proof}

\section{The $X$-rank with respect to a linearly normal curve of genus two.}\label{section2}

In this section we restrict our attention to the case of smooth genus $2$ curves embedded  linearly normal in $\PP n$ and of degree $n+2$ for $n\geq 3$. 

In this case the Corollary \ref{c1} together with the Theorem \ref{a3} (when applicable) will assure that  the $X$-rank $r_X(P)$ of a point $P\in\tau(X)\setminus X$ can only be
$$n-2\leq r_X(P)\leq n-1.$$
If $n=3,4$ this tells that, if there is some point $P\in \tau(X)$ that does not belong to $\sigma_2^0(X)$, then the elements of the set $\tau(X)\setminus\sigma_2^0(X)$ can only have $X$-rank equal to $3$. What we will do in subsections \ref{deg5} and \ref{deg6} will be to study that set in the cases $n=3,4$ respectively. First we will give examples in which such a set is not empty,  then we will relate the choice of the tangent line to $X$ with its number of points $P$ with  $r_X(P)=3$.

\subsection{The case of a smooth linearly normal curve of degree $5$ in $\PP 3$}\label{deg5} 

For all this subsection $X\subset \PP 3$ will be linearly normal curve of degree $5$ and genus $2$.

In this case only the $X$-rank on the tangential variety of $X$ is not know.

\begin{remark}
Let $X \subset \PP 3$ be a smooth non degenerate curve. Since $r_X(P) \le 3$ for all $P\in \PP 3$ (see Proposition 5.1 in \cite{lt}) and since $\sigma_2(X)=\PP 3$ (see \cite{a}), then we have that 
$$r_X(P)=3\; \Leftrightarrow \; P \in \tau (X)\setminus (\sigma^0_2(X)).$$ Clearly this does not prove the actual existence of a point $P\in \PP 3$ such that $r_X(P)=3$. But, R. Piene  proved the existence of a smooth genus $2$ linearly normal curve $X\subset \PP 3$  and $P\in \PP 3$ whose $X$-rank is greater or equal than $3$ (\cite{p}, Example 4, pag. 110). This shows that there exists at least one case in which $\tau (X)\setminus (\sigma^0_2(X))\neq \emptyset$. 
\end{remark}

The Proposition \ref{p2.0} below shows that there are infinitely many manners to embed $X$ in $\PP 3$ in such a way that $\tau (X)\setminus (\sigma^0_2(X))\neq \emptyset$ and, moreover, that for any such embedding there exists at least one tangent line to $X$ on which there are exactly $6$ points of $X$-rank equal to 3.
\\
\\
Before proving Proposition \ref{p2.0} we need to recall standard facts on Weierstrass points that we will need in the sequel.

\begin{definition}\label{Wdef} 
A point $P$ on an algebraic curve $C$ of genus $g$ is a \emph{Weierstrass point} if there exists a non-constant rational function on $C$ which has at $P$ a pole of order less or equal than $g$ and which has no singularities at other points of $C$.
\end{definition} 

\begin{remark}\label{Wrem}
If the algebraic curve $C$ has genus $g\geq 2$ then there always exist at least $2g+2$ Weierstrass points, and only hyper-elliptic curves of genus $g$ have exactly $2g+2$ Weierstrass points. 
\\
The presence of a Weierstrass point on an algebraic curve $C$ of genus $g\geq 2$ ensures the existence of a morphism of degree less or equal than $g$ from the curve $C$ onto the projective line $\PP 1$.
\end{remark}

We can now prove the following proposition.

\begin{proposition}\label{p2.0}
Let $C$ be a smooth curve of genus $2$. Fix $O\in C$ such that there is no $U\in C$
such that $\mathcal {O}_C(3O) \cong \omega _C(U)$ (this condition is satisfied by a general $O\in C$). Set $$L:= \omega _C(3O).$$
Let $\varphi_{|L|} : C \to \PP 3$ be the degree $5$ linearly normal embedding of $C$ induced by the complete linear system $\vert L\vert$.
Set $X:= \varphi_{|L|} (C)$ and $Q:= \varphi_{|L|} (O)$. Then there are exactly $6$ points of $T_QX$ with $X$-rank equal to $3$.
\end{proposition}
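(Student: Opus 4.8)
The plan is to analyze points on $T_QX$ by looking at which effective divisors of small degree on $X$ have $Q$ in their span, and to translate the condition ``$P\in T_QX$ has $X$-rank $3$'' into a question about the linear system $|L|$ restricted near $O$. Since $\sigma_2(X)=\PP 3$ and $r_X(P)\le 3$ for all $P\in\PP 3$, by the earlier remark a point $P\in T_QX\setminus X$ has $r_X(P)=3$ precisely when $P\notin\sigma_2^0(X)$, i.e. when $P$ lies on no secant line $\langle A,B\rangle$ with $A,B\in X$ (including $Q$ itself). So the task is to count the points of the line $T_QX$ that are \emph{not} covered by proper secant lines. I would first note that $T_QX\setminus\{Q\}$ is a line minus a point, and a generic point of it has rank $3$ by Corollary \ref{c1} (which gives $r_X(P)\ge n-g=3-2\cdot\ldots$ wait, here $n=3$, $g=2$, so the bound is vacuous); instead the real input is that $\sigma_2^0(X)$ meets $T_QX$ in a constructible set whose complement we must show has exactly $6$ points.

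The key computation is the following. A point $P\in T_QX$ lies on a secant line $\langle A,B\rangle$ (with $A+B$ an effective degree-$2$ divisor, possibly $A=B$) iff $\langle A,B\rangle$ and $T_QX$ meet, iff the degree-$4$ scheme $2Q+A+B$ fails to impose independent conditions on hyperplanes, i.e. $h^1(\PP 3,\mathcal I_{2Q+A+B}(1))>0$. Because $\deg(\mathcal O_X(1))=5=2p_a(X)+1$, a degree-$4$ divisor $W$ on $X$ has $h^1(\mathcal I_W(1))>0$ iff $h^0(\mathcal O_X(1)(-W))\ge 2$, iff (by Riemann--Roch on the genus-$2$ curve $C$, using $L=\omega_C(3O)$) the degree-$1$ divisor $L-W$ moves, which on a genus-$2$ curve never happens, OR $W$ itself is special in a suitable sense. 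The precise statement: writing everything back on $C$, $P\in T_QX$ is on a secant iff there is an effective $D=A+B$ of degree $2$ with $h^0(C,L(-2O-D))\ge 1$, i.e. $L(-2O-D)=\omega_C(O-D)$ is effective, i.e. $\omega_C(O)\cong\mathcal O_C(D)$ for some effective $D$ of degree $2$. Since $\deg\omega_C(O)=3$, this is $D=$ the unique effective divisor of degree $2$ in $|\omega_C(O)-O'|$ for each $O'$ in the degree-$1$ series... I would make this precise: the relevant secant lines through points of $T_QX$ correspond to the effective divisors $D$ with $D\le$ the hyperplane section $2Q+R_0$ for hyperplanes through $T_QX$, so one is reduced to the residual series $|\omega_C(O)|=|L(-2O)|$, a $g^1_3$ on $C$ (well-defined and base-point-free precisely because of the genericity hypothesis $\mathcal O_C(3O)\ne\omega_C(U)$, which guarantees $2O$ is not in the base locus). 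The ramification points of this $g^1_3$ are the points where the secant line degenerates, and by Riemann--Hurwitz a degree-$3$ map $C\to\PP 1$ from a genus-$2$ curve has ramification divisor of degree $2\cdot 2-2+2\cdot 3=8$; the simple ramification points contribute the divisors $D=2A$ that give tangent (not just secant) lines hitting $T_QX$, and careful bookkeeping of which of these actually meet $T_QX$ away from $Q$ yields the number $6$.

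So the steps, in order: (1) reduce ``$r_X(P)=3$'' to ``$P\in T_QX$, $P\ne Q$, and $P$ lies on no secant line of $X$''; (2) show $P\in T_QX$ lies on a secant line $\langle A,B\rangle$ iff $h^1(\PP 3,\mathcal I_{2Q+A+B}(1))>0$, hence (degree $5=2g+1$) iff $\omega_C(O)\cong\mathcal O_C(A+B)$, i.e. $A+B\in|\omega_C(O)|$; (3) identify $|\omega_C(O)|$ with a base-point-free $g^1_3$ on $C$ (base-point-freeness is exactly the genericity hypothesis on $O$) giving a degree-$3$ morphism $\pi:C\to\PP 1$; the fibers $\pi^{-1}(t)=A+B+E$... no: a $g^1_3$ fiber has degree $3$, so I instead use the $g^1_2$-part: for each $t\in\PP 1$, $\pi^{-1}(t)$ is a degree-$3$ divisor $A_t+B_t+C_t$, and the secant lines meeting $T_QX$ are spanned by the degree-$2$ sub-divisors; (4) parametrize $T_QX$ by $\PP 1$ and show the map ``$P\mapsto$ the secant(s) through $P$'' is controlled by $\pi$, so that $P$ fails to be on a secant exactly at the branch points of a suitable auxiliary degree computation; (5) count via Riemann--Hurwitz / Pl\"ucker formulas to get exactly $6$. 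The main obstacle I anticipate is step (4)--(5): making the correspondence between points of $T_QX$ and divisors in the $g^1_3$ completely precise, handling the subtle cases where $A=B$ (tangent lines) or where $A$ or $B$ equals $Q$ (which would put $P$ back in a secant through $Q$, possibly $T_QX$ itself), and ensuring no point is double-counted, so that the branch-point count comes out to exactly $6$ rather than $8$ or some other Riemann--Hurwitz-flavored number; the genericity hypothesis $\mathcal O_C(3O)\not\cong\omega_C(U)$ must be used exactly to rule out the degenerate coincidences that would change the count.
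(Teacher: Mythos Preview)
Your reduction in step (1) is right, but the core computation goes off the rails at step (2)--(3), and this is a genuine gap, not just bookkeeping.

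First, a degree slip: for a degree-$4$ scheme $W\subset X$, the condition ``$W$ fails to impose independent conditions on hyperplanes'' is $h^0(L(-W))\ge 1$, not $\ge 2$; and $L(-2O-A-B)=\omega_C(O-A-B)$ being effective means $A+B+E\in|\omega_C(O)|$ for some point $E$, not $A+B\in|\omega_C(O)|$ (the degrees do not match). More importantly, $|\omega_C(O)|$ is \emph{never} base-point free: since $h^0(\omega_C(O))=h^0(\omega_C)=2$, the point $O$ is always a base point, and stripping it off leaves the canonical $g^1_2$, not a $g^1_3$. So your Riemann--Hurwitz count of $8$ is for the wrong map.

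The underlying geometric reason is that $Q$ is an inflection point: because $L=\omega_C(3O)$, the line bundle $L(-2O)=\omega_C(O)$ has $O$ as a base point, so $T_QX\cap X=3Q$ scheme-theoretically. Hence the linear projection from the \emph{line} $T_QX$ is a degree-$2$ map $X\to\PP 1$; it is exactly the canonical map, with $6$ ramification points, the Weierstrass points of $C$. The paper's argument runs: for each Weierstrass point $B$ one has $3Q+2B\in|L|$, so $T_QX$ and $T_BX$ span a plane and meet in a single point $P_B$; and every other point of $T_QX\setminus\{Q\}$ lies on a genuine reduced secant $\langle A,B\rangle$ with $A+B\in|\omega_C|$.

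Finally, you are missing the step that actually uses the genericity hypothesis $\mathcal O_C(3O)\not\cong\omega_C(U)$. It is \emph{not} used for base-point freeness. It is used to show that the six special points $P_B$ really have rank $3$: one must exclude that some \emph{other} secant $\langle Z\rangle$ with $Z\in|\omega_C|$, $Z\ne 2B$, passes through $P_B$. From $P_B\in T_BX$ one gets a plane $\langle T_BX,Z\rangle$ cutting $X$ in $2B+Z+A$ for some $A\in X$; combined with $2B\sim\omega_C$ and $Z\sim\omega_C$ this forces $\omega_C(A)\cong\mathcal O_C(3O)$, which is precisely what the hypothesis forbids. Without this, the count of rank-$3$ points could drop below $6$.
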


\begin{proof}
Since $\deg (L) =5 =2p_a(C)+1$, then $L$ is very ample and $h^0(C,L)=4$ (as implicitly claimed in the statement). 
By hypothesis $\mathcal {O}_C(3O) \nsim \omega _C(O)$, then we have also that $\mathcal {O}_C(2O) \nsim \omega _C$, that means that $O$ is not a Weierstrass point of $C$ (see Definition \ref{Wdef}). 
\\
Now $\mathcal {O}_X(1) \cong \omega _X(3Q)$, then, by Riemann-Roch theorem, the point $Q=\varphi_{|L|}(O)\in X$ defined in the statement, is the unique base point of $\mathcal {O}_X(1)(-2Q)$. 
Therefore:
$$T_QX\cap X=3Q$$   
where  the intersection is scheme-theoretic and $3Q\subset X$ is an effective Cartier divisor of $X$.
\\
Since the genus of $X$ is $g=2$ we have that, by Remark \ref{Wrem}, the number of Weierstrass points of $X$, in characteristic different from $2$, is exactly $6$. Moreover the the canonical morphism $u:X \to \mathbb {P}^1$ recalled in Remark \ref{Wrem} is induced by the linear projection from $T_QX\subset \PP 3$, and the ramification points of such a morphism $u$ are, by definition, the Weierstrass points of $X$. 
\\
Let $B\in X$ be one of these Weierstrass points (by assumption $B\ne Q$).
Since $\mathcal {O}_X(3Q+2B) \cong \mathcal {O}_X(1)$, $B \ne Q$, and $X$ is linearly normal, then $\langle T_QX\cup T_BX \rangle$ is a plane, and let 
$$P=T_QX\cap T_BP.$$ 
Now $\deg (X)=5$ and $P\notin \{Q,B\}$, then we have that $P\notin X$, i.e. 
$$r_X(P) \ge 2.$$ 
We claim that $r_X(P) \ge 3$ and hence $r_X(P)=3$ (\cite{lt}, Proposition 5.1). We also check that $2B$ and $2Q$ are the only degree $2$ effective divisors $Z$ on $X$ such that $P\in \langle Z\rangle$. 
\\
Assume that there is a degree $2$ divisor $Z\subset X$ such that  $P\in\langle Z \rangle$ but $Z\neq 2Q , 2B$. Since $\langle Z\rangle \cap T_QX = \{P\}$, then $\langle T_QX\cup \langle Z\rangle\rangle$ is a plane.
Since the effective Cartier divisor $3Q$ of $X$ is the scheme-theoretic intersection of $X$ and $T_QX$, we get that $Z+3Q\in \vert \mathcal {O}_X(1)\vert$, i.e. $Z\in \vert \omega _X\vert$. 
Analogously $\{P\} = \langle Z\rangle \cap T_BX$, then again $\langle T_BX\cup \langle Z\rangle\rangle$ is a plane. Thus there is a point $A\in X$ such that $2B+Z+A\in \vert \mathcal {O}_X(1)\vert$,
i.e. $Z+A \in \vert 3Q\vert$. Let $U$ be the only point of $C$ such that $\varphi_{|L|} (U) =A$. Since $Z\in \vert \omega _X\vert$, we get
$\omega _C(U) \cong \mathcal {O}_C(3O)$, contradicting our assumption
on $O\in C$. Now, varying $B$ among the $6$ Weierstrass points, we get $6$ points of $T_QX$ with $X$-rank $3$. All other points of $T_QX\backslash \{Q\}$
have $X$-rank $2$, because they are in the linear span of a reduced divisor $Z\in \vert \omega _X\vert$.
\end{proof}

\subsection{The case of a smooth linearly normal curve of degree $6$ in $\PP 4$}\label{deg6}

For all this subsection $X\subset \PP 4$ will be linearly normal curve of degree $6$ and genus $2$.

In order to completely describe the $X$-rank of points in $\PP 4$ for such a  $X$, we need to recall that from \cite{a} we have that $\PP 4=\sigma_3(X)$. Clearly $r_X(P)\leq 3$ for all $P\in \sigma_3^0(X)$, but this does not give any information neither on the points $P\in \tau(X)\setminus \sigma_2^0(X)$ nore on $\PP 4\setminus \sigma_3^0(X)$. In the next proposition we show that:
$$r_X(P)\leq 3$$ 
for all $P\in \PP 4$.

\begin{proposition}\label{grado6}
Let $X\subset \PP 4$ be a smooth and linearly normal curve of
degree $6$ and genus $2$. Then every $P\in \mathbb {P}^4\backslash \sigma_2^0(X)$ has $X$-rank $3$.
\end{proposition}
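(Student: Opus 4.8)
The plan is to reduce the statement to producing, through $P$, a plane that meets $X$ in at least three points: since $X$ has no trisecant line (see below), any three of those points span the plane and a point of the plane has $X$-rank at most $3$, while $r_X(P)\ge 3$ is automatic from $P\notin\sigma_2^0(X)$. The basic numerical inputs, which I would establish first, are: by Lemma \ref{a1}, every effective divisor $W\subset X$ of degree $\le 3$ has $h^1(\PP 4,\mathcal{I}_W(1))=0$, so every degree-$2$ divisor of $X$ spans a line and every degree-$3$ divisor spans a plane (hence $X$ has no trisecants); and, by Riemann--Roch, for each $Q\in X$ the system $|\mathcal{O}_X(1)(-2Q)|$ is a base-point-free $g^2_4$. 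I then treat the two cases of the statement separately.

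Assume first $P\in T_QX\setminus X$ (the case $P\in\tau(X)\setminus\sigma_2^0(X)$, with $Q$ unique). Since $h^1(\PP 4,\mathcal{I}_{2Q}(1))=0$, the hyperplanes through $T_QX$ form a $\PP 2$ and cut $X$ along the divisors $2Q+E_H$, where $E_H$ runs over all of $|\mathcal{O}_X(1)(-2Q)|$ as $H$ varies. For general $H$ the divisor $E_H$ is four distinct points $A,B,C,D$, and the span of any three of them is a plane inside $H\cong\PP 3$ meeting the line $T_QX$ in a single point, which has $X$-rank $\le 3$. It suffices to show that, as $H$ and the chosen triple vary, these points cover all of $T_QX\cong\PP 1$. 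Non-constancy is clear when $Q$ is not a Weierstrass point: then $A,B,C,D$ span $H$, so for fixed $H$ the four planes through triples of them meet only in $\langle A,B\rangle\cap\langle C,D\rangle=\emptyset$ and hence cannot all pass through one point of $T_QX$. When $Q$ is a Weierstrass point, $\langle E_H\rangle$ is itself a plane, and constancy would force a fixed $V\in T_QX$ to lie on $\langle E\rangle$ for every $E\in|\mathcal{O}_X(1)(-2Q)|$; projecting $X$ from $V$ would then give a non-degenerate curve of degree $\le 6$ in $\PP 3$ with a two-dimensional family of $4$-secant lines, forcing that curve to be planar and $X$ to be degenerate — a contradiction. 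Hence $r_X(P)=3$.

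Assume now $P\notin\sigma_2(X)$; then the $X$-border rank of $P$ is exactly $3$ (as $\sigma_3(X)=\PP 4$), so by Proposition 2.8 of \cite{bgi} there is an effective degree-$3$ divisor $Z\subset X$ with $P\in\langle Z\rangle$, and $\langle Z\rangle$ is a plane $\Pi$. If $Z$ is reduced we are done, so assume $Z$ is supported on one or two points. The residual system $|\mathcal{O}_X(1)(-Z)|$ is a $g^1_3$, and the pencil of hyperplanes through $\Pi$ cuts $X$ along $Z+E$ with $E\in|\mathcal{O}_X(1)(-Z)|$; for general reduced $E$, the plane $\langle E\rangle$ lies in the corresponding hyperplane and meets $\Pi$ in a line $\lambda_E\subset\Pi$ whose points all have $X$-rank $\le 3$. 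As $E$ varies the $\lambda_E$ form a one-parameter family of lines in $\Pi\cong\PP 2$; I would show it is non-constant — a constant family would put $X$ onto the scroll of planes through a fixed line, i.e. exhibit $X$ as a multiple cover of a rational curve in a way compatible with $\mathcal{O}_X(1)$ only in the degenerate situations mentioned below — so the $\lambda_E$ sweep out $\Pi$ and $P\in\lambda_E$ for some reduced $E$, giving $r_X(P)=3$.

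The part I expect to be genuinely delicate is verifying these non-constancy statements for the few exceptional embeddings: when $\mathcal{O}_X(1)=\psi^{\ast}\mathcal{O}_{\PP 1}(3)$ for the hyperelliptic double cover $\psi\colon X\to\PP 1$ (equivalently, when $|\mathcal{O}_X(1)(-K_X)|$ maps $X$ two-to-one onto a conic), and when $Q$ is one of the six Weierstrass points, in which cases the $g^2_4$ or $g^1_3$ involved acquires extra symmetry. For those I would bypass the sweeping argument and directly exhibit a plane through $P$ containing three distinct points of $X$, using the six Weierstrass points and the hyperelliptic pencil exactly as in the proof of Proposition \ref{p2.0}.
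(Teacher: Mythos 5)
Your overall reduction is sound (the inequality $r_X(P)\ge 3$ is immediate, Lemma \ref{a1} does give that every length-$3$ subscheme of $X$ spans a plane, hence no trisecant lines, so it suffices to put $P$ in a plane containing three distinct points of $X$), but both of your sweeping constructions only reach a \emph{general} point of the relevant linear span, while the Proposition is about \emph{every} point. In the tangential case, the set of points $\langle S\rangle\cap T_QX$, with $S$ a reduced triple contained in some $E_H$, is only a constructible subset of $T_QX\cong\PP 1$; your non-constancy argument makes it infinite, hence cofinite, but not all of $T_QX$, and you cannot recover the finitely many missing points by a limit argument, because a limit of planes spanned by reduced triples is only guaranteed to contain a length-$3$ subscheme of $X$, possibly non-reduced, which yields border rank $3$ but not rank $3$ (that $\sigma_3^0(X)$ is closed is a \emph{consequence} of this Proposition, so it cannot be invoked). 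In the case $P\notin\sigma_2(X)$ the gap is worse: $P$ is a fixed, arbitrary point of $\Pi=\langle Z\rangle$, and showing that the lines $\lambda_E$, for $E$ reduced in the $g^1_3$, sweep out a dense constructible subset of $\Pi$ does not place $P$ on any $\lambda_E$ --- the complement of that union is a proper closed subset of $\Pi$ (containing for instance the lines attached to the finitely many non-reduced $E$, whose points you cannot bound by $3$), and $P$ may lie there. So what you actually prove is that a general point of $T_QX$, respectively of $\langle Z\rangle$, has rank at most $3$ --- a statement in the spirit of Theorem \ref{a3} --- not the assertion for every $P\in\PP 4\setminus\sigma_2^0(X)$. (The exceptional situations you defer, namely $\mathcal{O}_X(1)$ of hyperelliptic type and Weierstrass $Q$, are also only announced rather than proved, but they are secondary to this structural issue.)

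The paper gets ``every $P$'' by projecting from the given point $P$ itself: if $P\notin\sigma_2(X)$ the image is a smooth degree-$6$ genus-$2$ curve in $\PP 3$, and if $P\in\tau(X)\setminus\sigma_2^0(X)$ it is a degree-$6$ curve with a single cusp and arithmetic genus $3$; in either case the image curve possesses lines meeting it in at least three points, and any such line pulls back to a plane through $P$ containing three points of $X$, which span that plane because length-$3$ subschemes of $X$ are linearly independent. To repair your argument you would need a device of this kind, producing for the \emph{given} $P$ a reduced triple whose span contains $P$, rather than a genericity/sweeping argument inside $T_QX$ or $\langle Z\rangle$.
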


\begin{proof}
Since, by hypothesis, $P\notin \sigma_2^0(X)$, then obviously $r_X(P) \ge 3$. It is sufficient to prove the reverse inequality. 

We start proving the statement for $P\notin \sigma_{2}(X)$, i.e. $P$ neither in $\sigma_2^0(X)$, nor in $\tau(X)$. 
\\
Let $\ell_P: \mathbb {P}^4\backslash P \to \mathbb {P}^3$ be the linear projection from $P$. Since $P\notin \sigma_2(X)$, the image $\ell _P(X)\subset \PP 3$ is a smooth and degree $6$ curve isomorphic to $X$. Since $\binom{6}{2}=15$ and  $\mathcal {O}_X(2)$ is not special, while $h^0(X,\mathcal {O}_X(2)) = 12+1-2 =11$, we get that $h^0(\mathcal {I}_X(2)) \ge 4$. Now $2^3 >6$ and Bezout's theorem imply that $X$ is contained in a minimal degree surface $S\subset \PP 4$ and that it is a complete intersection of $S$ with a quadric hypersurface. The surface $S$ is either a cone over a rational normal curve or a degree $3$ smooth surface isomorphic to the Hirzebruch surface. In both cases the adjunction formula shows that a smooth curve, which is scheme-theoretically the intersection with $S$ and a quadric hypersurface, is a genus $2$ linearly normal curve. Since $X$ is cut out by quadrics, there is no line $L\subset \mathbb {P}^4$ such that $\mbox{length}(L\cap X)\ge 3$. For every smooth and non-degenerate space curve $Y$ (except the rational normal curves and the degree $4$ curves with arithmetic genus $1$) there are infinitely many lines $M\subset \PP 3$ such that $\mbox{length}(M\cap Y)\ge 3$ (it is sufficient to consider a projection of $Y$ into a plane from a general point of $Y$). In characteristic zero only finitely many tangent lines $T_OY$, with $O\in Y_{reg}$, have order of contact $\ge 3$ with $Y$ at $O$. Hence if $Y \subset \PP 3$ is the smooth curve $\ell _O(X)$, there are infinitely many lines $M\subset \PP 3$ such that $\sharp ((M\cap \ell_O(X))_{red}) \ge 3$. Any such line $M$ is the image via $\ell_{P}$, of a plane $\Pi\subset \mathbb {P}^4$ containing $P$ and at least $3$ points of $X$. Since $X$ has no trisecant (o multisecant) lines, the plane $\Pi$ must be spanned by the points of $X$ contained in it. 

To complete the picture, it only remains to show that if $P\in \tau(X)$ then  $r_X(P) \le 3$. Clearly  if $P\in X\subset \tau(X)$ then $r_{X}(P)=1$, and if there exists a bisecant line $L$ to $X$ such that $L\cap \tau(X)={P}$, then $r_{X}(P)=2$. We actually have to prove that the points $P\in \tau(X)$ such that $P\notin \sigma_{2}^{0}(X)$ have $X$-rank $3$.

Let $\ell_{O}:\PP 4 \setminus \{O\}\to \PP 3$ be the linear projection from $O\in X \subset \PP 4$, and let $C\subset \mathbb {P}^3$ be the closure of $\ell _O(X\backslash \{O\})$ in $\mathbb {P}^3$. Since $\deg (X)=4+2p_a(X)-2$, we have $h^1(X,\mathcal {O}_X(1)(-Z))=0$ for every effective divisor $Z\subset X$ such that $\deg (Z)\le 3$. Hence the scheme $T_OX\cap X$ has length $2$. Thus $C\subset \PP 3$ is a degree $6$ space curve birational to $X$, with arithmetic genus $3$ and an ordinary cusp at $\ell _O(P)$ as its unique singular point. Fix a general $A\in C$. Since $A$ is general, it is not contained in the tangent plane to $C$ at $\ell _O(P)$. Moreover, by the same reason, there is no $B\in C_{reg}\backslash \{A\}$ such that $A\in T_BC$. Thus, if $\ell_{A}:\PP 3\setminus \{A\}\to \PP 2$ is the linear projection from $A\in \PP 3$,  the closure of $\ell _A(C\backslash \{A\})$ in $\PP 2$ is a degree $5$ plane curve with an ordinary cusp and at least one non-unibranch point with multiplicity $\ge 2$. Hence there is a line $M\subset \mathbb {P}^3$ such that $\sharp (M\cap C)\ge 3$ and $A\in M$. Hence $C$ has a one-dimensional family $\Gamma$ of lines $L$ such that $\sharp (L\cap C)=3$. Fix any $L\in \Gamma$ and let $\Pi\subset \mathbb {P}^4$ be the only plane such that $P\in \Pi$ and $\ell _P (\Pi\backslash \{P\})=L$. Since $\ell _P\vert X: X \to C$ is injective, then $\sharp (\Pi\cap X)=3$. Since any length $3$ subscheme of $X$ is linearly independent, $\Pi = \langle \Pi\cap X\rangle$. Since $P\in \Pi$, we get $r_X(P)\le 3$.
\end{proof}

\begin{corollary} Let $X\subset  \PP 4$  be a smooth and linearly normal curve of degree $6$ and genus $2$. Then  $\overline{\sigma_{3}^{0}(X)}=\sigma_{3}^{0}(X)=\PP 4$
\end{corollary}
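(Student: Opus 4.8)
The plan is to read this off directly from Proposition \ref{grado6}, with only a bookkeeping argument about the definitions. Since $\sigma_3^0(X)\subseteq \PP 4$ trivially, it suffices to prove the reverse inclusion, namely that every $P\in\PP 4$ satisfies $r_X(P)\le 3$, i.e. lies in $\sigma_3^0(X)$ by definition \eqref{sigma0}. First I would split $\PP 4$ into the two pieces $\sigma_2^0(X)$ and $\PP 4\setminus\sigma_2^0(X)$: if $P\in\sigma_2^0(X)$ then $r_X(P)\le 2\le 3$ straight from \eqref{sigma0}, while if $P\notin\sigma_2^0(X)$ then $r_X(P)=3$ by Proposition \ref{grado6}. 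In either case $P\in\sigma_3^0(X)$, so $\sigma_3^0(X)=\PP 4$.

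For the remaining equality $\overline{\sigma_3^0(X)}=\sigma_3^0(X)$, I would simply note that $\PP 4$ is a closed subset of itself in the Zariski topology, so once we know $\sigma_3^0(X)=\PP 4$ its closure is $\overline{\PP 4}=\PP 4=\sigma_3^0(X)$; this records the (otherwise exceptional, cf. the discussion after \eqref{sigma0}) fact that here the rank-$\le 3$ locus happens to be closed, because it fills the ambient space.

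There is essentially no obstacle: all the geometric content — the case $P\notin\sigma_2(X)$ via the double projection and the complete-intersection description of $X$, and the case $P\in\tau(X)\setminus\sigma_2^0(X)$ via the projection from a point of $X$ and the trisecant count of the resulting cuspidal space curve — is already carried out in the proof of Proposition \ref{grado6}. The only point deserving a word is the compatibility of $\sigma_3^0(X)$ as defined in \eqref{sigma0} with the statement of Proposition \ref{grado6}, namely that "$r_X(P)=3$'' forces $P\in\sigma_3^0(X)$, which is immediate. Hence the corollary follows in a couple of lines.
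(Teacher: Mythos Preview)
Your argument is correct and matches the paper's own proof: both deduce $r_X(P)\le 3$ for every $P\in\PP 4$ from Proposition \ref{grado6} (together with the trivial bound on $\sigma_2^0(X)$), conclude $\sigma_3^0(X)=\PP 4$, and then observe that this set is already closed. The only cosmetic difference is that the paper first records $\overline{\sigma_3^0(X)}=\sigma_3(X)=\PP 4$ before checking the rank bound, whereas you go directly to $\sigma_3^0(X)=\PP 4$; the content is identical.
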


\begin{proof} By the definition of secant variety that we gave in (\ref{secant}) we have that $\sigma_{3}(X)=\overline{\sigma_{3}^{0}(X)}$ and for any  $X\subset \PP 4$  smooth and linearly normal curve $\sigma_{3}(X)=\PP 4$. By Proposition \ref{grado6}, $r_{X}(P)\leq 3$ for all $P\in \PP 4$, hence if $P\in \PP 4$ is such that  there exists a non reduced scheme $Z\subset X$ of length $3$ for which $P\in \langle Z\rangle$, there always exists another reduced scheme $Z'\subset X$ of length at most $3$ such that $P \in \langle Z'\rangle$. Hence in the Zariski closure of $\sigma_{3}^{0}(X)$ the $X$-rank doesn't increase.
\end{proof}

The Proposition \ref{grado6} gives a complete stratification of the $X$-rank of the points in $\PP 4$ with respect to a genus $2$ curve $X\subset \PP 4$ of degree $6$ embedded linearly normal. We implicitly proved that if $P\in \PP 4$ is such that  $r_{X}(P)=3$ then $P \in \tau(X)\cup \sigma_{3}^{0}(X)$. Clearly if $P \in \sigma_{3}^{0}(X)\subset \PP 4$ then $r_{X}(P)=3$. We can actually be more precise about the points belonging to $\tau(X)\setminus X$ for $X\subset \PP 4$ as above. Are all of them of $X$-rank $3$ or is the intersection between $\tau(X)$ and $\sigma_{2}^{0}(X)$ not empty? Moreover, which is the cardinality of $\tau(X)\cap \sigma_{2}^{0}(X)$? We describe it in the following proposition.

\begin{proposition}\label{p3} Let $X\subset  \PP 4$  be a smooth and linearly normal curve of degree $6$ and genus $2$. 
Fix $O \in X$. The linear projection from $T_OX$ does not induce a birational morphism onto
a degree $4$ plane curve if and only if $\mathcal {O}_X(1) \cong \omega _X^{\otimes 2}(2O)$. The space
$T_{O}X$ contains only
\begin{itemize}
\item  $1$ point of $X$-rank equal to $2$ if and only if $T_{O}X$ induces a birational morphism from $X$ to a plane curve;
\item  $5$ points of $X$-rank equal to $2$ if and only if $T_{O}X$ doesn't induce a birational morphism from $X$ to a plane curve and $O\in X$ is a Weiestrass point of $X$, 
\item $6$ points of $X$-rank equal to $2$ if and only if $T_{O}X$ doesn't induce a birational morphism from $X$ to a plane curve and $O\in X$ is not a Weiestrass point of $X$.
\end{itemize}
All the other points in $T_O(X)$ have $X$-rank equal to $3$.
\end{proposition}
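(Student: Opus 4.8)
The plan is to read everything off the projection of $X$ away from the line $T_OX$. Since $\deg X = 6 = 4 + 2p_a(X) - 2$, Lemma~\ref{a1} gives $h^1(\mathbb{P}^4,\mathcal{I}_Z(1)) = 0$ for every effective $Z\subset X$ with $\deg Z\le 3$; in particular $X\cap T_OX = 2O$ as schemes, $h^0(\mathbb{P}^4,\mathcal{I}_{2O}(1)) = 3$, and $X$ has no trisecant line. Hence the linear projection $\ell\colon\mathbb{P}^4\dashrightarrow\mathbb{P}^2$ from $T_OX$ restricts on $X$ to the morphism $\varphi\colon X\to\mathbb{P}^2$ attached to the complete, base-point-free linear system $|\mathcal{O}_X(1)(-2O)|$, a $g^2_4$, so that $\deg\varphi\cdot\deg\varphi(X) = 4$. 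For the first assertion, recall that a genus $2$ curve has a unique $g^1_2$, namely $|\omega_X|$: thus a $g^2_4$ that is not birational onto its image is composed with the hyperelliptic map $h\colon X\to\mathbb{P}^1$, has image a smooth conic, and satisfies $\mathcal{O}_X(1)(-2O)\cong h^{*}\mathcal{O}_{\mathbb{P}^1}(2) = \omega_X^{\otimes2}$; conversely, that isomorphism makes $\varphi = \nu_2\circ h$ with $\nu_2$ the conic embedding. Therefore $\varphi$ fails to be birational onto a degree $4$ plane curve exactly when $\mathcal{O}_X(1)\cong\omega_X^{\otimes2}(2O)$.

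Next I would translate "rank $2$" into a coincidence statement for $\varphi$. For $P\in T_OX\setminus\{O\}$ one has $P\notin X$ and, by Corollary~\ref{c1} together with Proposition~\ref{grado6}, $2\le r_X(P)\le 3$; so $r_X(P) = 2$ if and only if $P$ lies on a genuine secant $\langle A,B\rangle$ with $A\ne B$ in $X$. As $X$ has no trisecant line, such $A,B$ avoid $O$, hence lie off $T_OX$, and $\langle A,B\rangle$ then meets $T_OX$ precisely when $\varphi(A)=\varphi(B)$. So the rank $2$ points of $T_OX$ are exactly the points $\langle A,B\rangle\cap T_OX$ with $A\ne B$ and $\varphi(A)=\varphi(B)$, every remaining point of $T_OX$ having rank $3$.

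If $\varphi$ is birational, $\varphi(X)$ is a plane quartic of geometric genus $2$, so $\sum_q\delta_q = 1$: for a general choice of $O$ it has a single ordinary node lying off $\varphi(O)$, whose two branches give the unique pair $A\ne B$ with $\varphi(A)=\varphi(B)$, hence the unique rank $2$ point. If $\varphi$ is not birational, $\varphi = \nu_2\circ h$, so the pairs with $\varphi(A)=\varphi(B)$ are exactly the hyperelliptic pairs $\{A,\sigma A\}$ ($\sigma$ the hyperelliptic involution), and the secants $\langle A,\sigma A\rangle$, with $A+\sigma A\in|\omega_X|$, sweep a rational scroll $S$. One checks that $S$ is a cone if and only if the projection of $X$ from its vertex factors through $h$, i.e. if and only if $\mathcal{O}_X(1)\cong h^{*}\mathcal{O}_{\mathbb{P}^1}(3) = \omega_X^{\otimes3}$, which in the regime $\mathcal{O}_X(1)\cong\omega_X^{\otimes2}(2O)$ amounts to $2O\sim\omega_X$, i.e. to $O$ being a Weierstrass point. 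Intersecting $S$ with $T_OX$ and tracking the six places where the hyperelliptic secant degenerates to a tangent line — the tangent lines at the six Weierstrass points, one of which is $T_OX$ itself precisely when $O$ is a Weierstrass point — isolates the exceptional points of $T_OX$; a direct check then gives that there are $5$ of them when $O$ is a Weierstrass point and $6$ when $O$ is not, as in the statement.

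The crux is the non-birational case: making the cone dichotomy rigorous, showing that away from the finitely many points coming from the degenerate hyperelliptic secants $T_OX$ is ruled by genuine secants, verifying that those exceptional points are pairwise distinct and lie on no other genuine secant, and keeping careful track of whether $\varphi(O)$ is itself a coincidence point, so that the cardinalities $5$ and $6$ fall out exactly and the rank of the remaining points is pinned down. The first assertion and the birational count are, by comparison, immediate once $\varphi$ has been identified with the system $|\mathcal{O}_X(1)(-2O)|$.
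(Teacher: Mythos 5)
Your setup is sound and follows the same route as the paper: you identify the projection from $T_OX$ with the $g^2_4$ attached to $|\mathcal {O}_X(1)(-2O)|$, characterize the non-birational case by $\mathcal {O}_X(1)\cong \omega _X^{\otimes 2}(2O)$, and correctly observe that a point of $T_OX\setminus \{O\}$ has rank $2$ exactly when it lies on an honest secant $\langle A,B\rangle$ with $\varphi (A)=\varphi (B)$, i.e.\ (in the non-birational case) with $A+B\in |\omega _X|$. The cone dichotomy for the hyperelliptic scroll $S$ ($S$ is a cone iff $\mathcal {O}_X(1)\cong \omega _X^{\otimes 3}$, iff $O$ is a Weierstrass point here) is also right, and is a useful piece of structure that the paper leaves implicit. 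The problem is the last step, which you defer to ``a direct check'': carried out inside your own framework, that check does not return the counts you claim. When $O$ is not a Weierstrass point, $S$ is a smooth cubic scroll and $T_OX$, which meets every ruling, must be its degree-one directrix; the rulings are in bijection with $|\omega _X|\cong \mathbb {P}^1$ and meet $T_OX$ in pairwise distinct points. Hence \emph{every} point of $T_OX$ other than $O$ and the six points cut out by the degenerate rulings $T_{W_i}X$ lies on an honest secant and has rank $2$: the six exceptional points are precisely the rank-$3$ points, so the number $6$ attaches to rank $3$, not to rank $2$. In the Weierstrass case the situation is different again: $T_OX$ is itself a ruling of the cone $S$, every other ruling meets it only at the vertex $V$, so $T_OX$ carries exactly one rank-$2$ point (namely $V$), and the five remaining degenerate rulings all pass through $V$ as well, so they do not produce five distinct exceptional points. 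Thus the gap is not an omitted computation that would confirm the statement; completing your own scroll analysis contradicts the counts you assert. (You are in good company: the closing sentence of the paper's proof reads ``all except $5$ or $6$ points of $T_QX\setminus \{Q\}$ have $X$-rank $2$,'' which is the complement of the proposition's wording, so the tension is already internal to the paper.)

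A smaller point on the birational case: you obtain the single rank-$2$ point only ``for a general choice of $O$.'' If the unique singularity of the quartic $\varphi (X)$ is a cusp rather than a node, the extra line through $T_OX$ is a tangent line of $X$ and the corresponding point has rank $3$, so the count ``exactly one rank-$2$ point'' requires ruling this out (the paper itself concedes that ``both cases may occur''). To salvage the argument you would need to state and prove the corrected counts, distinguishing nodes from cusps in the birational case and treating the cone and non-cone scrolls separately in the non-birational one.
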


\begin{proof} Since $\mbox{length}(T_OX\cap X)=2$, $\deg (X)=6$, and $X$ is smooth, then, by Lemma \ref{a1}, the morphism $\ell _O\vert X\setminus \{O\}$ extends to a morphism $v_O: X \to \PP 2$ such that $\deg (v_0)\cdot \deg (v_O(X))=4$. Since $v_O(X)$
spans $\PP 2$, $\deg (v_O(X)) \ge 2$. Hence either $v_O$ is birational or $\deg (v_O)=2$ and $v_O(X)$ is a smooth conic.
The latter case occurs if and only if $\mathcal {O}_X(1) \cong \omega _X^{\otimes 2}(2O)$.

The last sentence of the statement is a direct consequence of Proposition
\ref{grado6}. Let us prove the previous part. Since $X$ has no trisecant lines, then $r_X(P)=2$ if and only if there is a line
$L$ such that
$P\in L$ and
$\sharp (D\cap X)=2$. 

First assume that the linear projection from $T_OX$ induces a birational morphism from $X$ onto a plane curve. Since $\mbox{length}(T_OX\cap X)=2$, the linear projection from $T_OX$ and the genus formula for degree $4$ plane curves show the existence of exactly one $P'\in T_OX$ contained in another tangent or secant line; both cases may occur for some pairs $(X,O)$.

Now assume that the linear projection from $T_OX$ does not induce a birational morphism of $X$ onto a plane curve. Since $\mbox{length}(T_OX\cap X)=2$, it induces a degree $2$ morphism $\phi : X \to E$, with $E \subset \mathbb {P}^2$ a smooth conic. Hence $\phi$ is the hyperelliptic pencil.
Therefore $\mathcal {O}_X(1) \cong \omega _X^{\otimes 2}(2O)$. Then, for a fixed abstract curve of genus $2$, there is a one-dimensional family of linearly normal embeddings having such tangent lines, while a general element of $\mbox{Pic}^6(X)$ has no such tangent line. For that tangent line $T_OX$, the morphism $\phi$ has $6$ ramification points (by Riemman-Hurwitz formula) and $O$ may be one of them (it is one of them if and only if $O$ is one of the $6$ Weierstrass points of $X$). Hence all except $5$ or $6$ points of $T_OX\backslash \{O\}$ have $X$-rank $2$.
\end{proof}

\begin{remark}\label{f1}
Let $X$ be an abstract smooth curve of genus $2$. Every element of $\mbox{Pic}^6(X)$ is very ample. The algebraic set $\mbox{Pic}^6(X)$ is isomorphic to a $2$-dimensional abelian variety $\mbox{Pic}^0(X)$. A one-dimensional closed subset of it (isomorphic to $X$) parametrizes the set $\Sigma$ all line bundles of the form $\omega _X^{\otimes 2}(2O)$ for some $O\in X$. Fix $L\in \Sigma$. Since the $2$-torsion of $\mbox{Pic}^0(X)$ is formed by $2^4$ points, there are exactly $2^4$ points $O\in X$ such that $L\cong \omega _X^{\otimes 2}(2O)$.
\end{remark}

\subsection{The case of a smooth linearly normal curve of degree $n+2$ in $\PP n$ for $n\geq 5$.}\label{degn+2}

For all this subsection $X\subset \PP n$ will be linearly normal curve of degree $n+2$ and genus $2$ and $n\geq 5$.

We treated the cases of $n=3,4$ separately from the others because for small values of $n$'s the behaviour of the $X$-rank for points in $\tau(X)$ is not consistent to the general case. In fact if $n=3,4$ then $r_X(P)=3$ for all $P\in \tau(X)\setminus\sigma_2^0(X)$ as proved in propositions \ref{p2.0} and \ref{grado6}. The Theorem \ref{a2} that we stated in the Introduction (and that we will prove in this section) shows that, if $n\geq 8$, then the $X$-rank of $P\in \tau(X)\setminus \sigma_2^0(X)$ is $r_X(P)=n-2$. If $n=5,6,7$ the behaviour of the $X$-rank of points in $\tau(X)\setminus \sigma_2^0(X)$ is not inconsistent whit that one of the general case in fact in Proposition \ref{z1} we show that if $n\geq 5$ the $X$-rank of a point $P\in \tau(X)$ is $r_X(P)\leq n-1$ and there are points  $P\in\tau(X)$ such that $r_X(P)=n-2$. 

\begin{proposition}\label{z1}
Let $X \subset \mathbb {P}^n$ be a smooth and linearly normal curve of genus $2$ and let $n\ge 5$.
Fix $Q\in X$. If $n=5$, then assume $\mathcal {O}_X(1) \ne \omega _X^{\otimes 2}(3Q)$. Then:
\begin{enumerate}
\item\label{a} there is $P\in T_QX\backslash X$ such that $r_X(P)=n-2$.
\item\label{b} $r_X(P)\le n-1$ for all $P\in T_QX$.
\end{enumerate}
\end{proposition}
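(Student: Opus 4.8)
The plan is to work throughout with the degree-$n$ line bundle $R:=\mathcal O_X(1)(-2Q)$ on $X$ (recall $\deg X=n+2$). By Lemma \ref{a1} one has $h^1(\mathbb P^n,\mathcal I_{2Q}(1))=0$, and likewise $h^1(\mathcal I_{3Q}(1))=0$; I will use three consequences repeatedly: $T_QX\cap X=2Q$ scheme-theoretically (an order-$3$ osculation would put a length-$3$ scheme spanning a plane inside the line $T_QX$), the bundle $R$ is very ample ($\deg R=n\ge 2g+1$), and every effective divisor on $X$ of degree $\le n-1$ is linearly independent in $\mathbb P^n$, so that $E\mapsto\langle E\rangle$ is a morphism from $\mathrm{Sym}^{n-1}(X)$ to the Grassmannian of $(n-2)$-planes.

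For part (\ref{a}), set $M:=R\otimes\omega_X^{-1}$, of degree $n-2$; by Riemann--Roch $h^0(M)=n-3$, so $|M|$ is nonempty of dimension $n-4\ge 1$. For every $Z\in|M|$ one has $\mathcal O_X(1)(-Z-2Q)\cong\omega_X$, whence $h^1(\mathcal I_{Z+2Q}(1))=1$ and $\dim\langle Z+2Q\rangle=n-2$; since $\langle Z\rangle$ is a $\mathbb P^{n-3}$ and $T_QX=\langle 2Q\rangle$ is a line, both lying in this $\mathbb P^{n-2}$, they meet in exactly one point $P_Z$. Choosing $Z\in|M|$ general I may further take $Z$ reduced with $Q\notin Z$; this is possible exactly when $|M|$ has no base point at $Q$, which is automatic for $n\ge 6$ (then $\deg M\ge 2g$) and, for $n=5$, is precisely the excluded equality $M\cong\omega_X(Q)$, i.e.\ $\mathcal O_X(1)\cong\omega_X^{\otimes 2}(3Q)$. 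For such $Z$ the point $P_Z$ is not on $X$ (otherwise $P_Z=Q\in\langle Z\rangle$, contradicting the independence of the degree-$(n-1)$ divisor $Z+Q$), so $r_X(P_Z)\le n-2$; Corollary \ref{c1} gives the reverse inequality, hence $r_X(P_Z)=n-2$, which is (\ref{a}).

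For part (\ref{b}) I first treat a general $P\in T_QX$. Running the same computation with the degree-$(n-1)$ divisors $E=D-A$, where $D\in|R|$ and $A\in\mathrm{supp}(D)$ (here $\mathcal O_X(1)(-E-2Q)\cong\mathcal O_X(A)$ is effective of degree $1$, so again $h^1(\mathcal I_{E+2Q}(1))=1$), one finds that $\langle E\rangle\cap T_QX$ is a single point $P_E$ with $r_X(P_E)\le n-1$ as soon as $D$, hence $E$, is reduced. These points $P_E$ cannot all coincide: if $P^\ast=P_E$ for all such $E$, then (after varying $D\in|R|$ if necessary) intersecting the spans $\langle D-A\rangle$ as $A$ runs over the points of a reduced $D$ with $Q\notin D$ would force $P^\ast$ onto an honest bisecant line of $X$; but for $n\ge 5$ and distinct $A,B\in X$ the divisor $2Q+A+B$ has degree $4$ with $\mathcal O_X(1)(-2Q-A-B)$ of degree $n-2>2$, hence spans a $\mathbb P^3$, so $T_QX\cap\langle A,B\rangle=\emptyset$. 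Thus $\{P\in T_QX : r_X(P)\le n-1\}$ contains a dense, hence cofinite, subset of $T_QX$.

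It remains to remove the finitely many possible exceptions. Consider $\mathcal B:=\bigcup_{W\in X}|R(-W)|\subseteq\mathrm{Sym}^{n-1}(X)$, a $\mathbb P^{n-3}$-bundle over $X$ and therefore proper, together with the map $\bar\Phi\colon\mathcal B\to T_QX$, $E\mapsto\langle E\rangle\cap T_QX$. For every $E\in\mathcal B$ the sheaf $\mathcal O_X(1)(-E-2Q)$ has degree $1$ and is effective, so $h^1(\mathcal I_{E+2Q}(1))=1$ and $\dim\langle E+2Q\rangle=n-1$, while $\langle E\rangle$ can never contain the line $T_QX$ (that would force $h^1(\mathcal I_{E+2Q}(1))=2$, impossible for a degree-$1$ sheaf on a genus-$2$ curve); hence $\langle E\rangle\cap T_QX$ is always one point and $\bar\Phi$ is a morphism. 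Being dominant (previous paragraph) with proper source, $\bar\Phi$ is surjective, and every fibre has dimension $\ge\dim\mathcal B-1=n-3\ge 2$. Since each $|R(-W)|$ is base-point-free of positive dimension, its general member is reduced, and one then obtains for every $P\in T_QX\setminus\{Q\}$ a reduced $E$ with $P\in\langle E\rangle$, i.e.\ $r_X(P)\le n-1$; together with $r_X(Q)=1$ this proves (\ref{b}). The step I expect to be the main obstacle is precisely this last one: checking that the positive-dimensional fibre $\bar\Phi^{-1}(P)$ over each exceptional $P$ is not entirely contained in the discriminant locus of $\mathrm{Sym}^{n-1}(X)$, since only a \emph{reduced} divisor $E$ with $P\in\langle E\rangle$ bounds the $X$-rank (a point in the span of a non-reduced degree-$(n-1)$ scheme need not have small rank). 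I would settle this by a direct dimension or monodromy analysis of these fibres, or, in the style of Proposition \ref{grado6}, by projecting $X$ from $P$ to a cuspidal curve $X'\subset\mathbb P^{n-1}$ and then further from general points until the degree and genus force the existence of the desired reduced multisecant linear space, which is then lifted back to $\mathbb P^n$.
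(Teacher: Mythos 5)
Your part (\ref{a}) is correct and is essentially the paper's own argument: you work with $R=\mathcal{O}_X(1)(-2Q)$ and $M=R\otimes\omega_X^{\vee}$, pick a reduced $A\in\vert M\vert$ avoiding $Q$ (possible exactly outside the excluded case $\mathcal{O}_X(1)\cong\omega_X^{\otimes 2}(3Q)$ when $n=5$), and the dimension count $\dim\langle A+2Q\rangle=n-2$, $\dim\langle A\rangle=n-3$ forces $\langle A\rangle\cap T_QX$ to be a single point distinct from $Q$, of rank exactly $n-2$ by Corollary \ref{c1}. The paper phrases this via the projection $\varphi_{|R|}$ from $T_QX$, but the content is the same.

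Part (\ref{b}) is where there is a genuine gap, and it is the one you yourself flag. Your incidence construction with $\mathcal{B}=\bigcup_{W\in X}\vert R(-W)\vert$ and the proper map $\bar\Phi$ only shows that every $P\in T_QX$ lies in the span of \emph{some} effective divisor $E$ of degree $n-1$; since $E$ may be non-reduced, this gives no bound on $r_X(P)$ (a point in the span of a non-reduced divisor can a priori have larger rank, which is exactly the phenomenon the whole paper is about), and the promised ``dimension or monodromy analysis of the fibres'' is not carried out. Moreover, the detour is unnecessary: the paper deduces (\ref{b}) from (\ref{a}) in one line. Take $P_1\in T_QX\setminus X$ with $r_X(P_1)=n-2$ and $S_1\subset X$ with $\sharp(S_1)=n-2$ and $P_1\in\langle S_1\rangle$, as produced in part (\ref{a}). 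Since $P_1\neq Q$ and both lie on the line $T_QX$, we have $T_QX=\langle\{P_1,Q\}\rangle\subset\langle S_1\cup\{Q\}\rangle$, hence every $P\in T_QX$ satisfies $r_X(P)\le\sharp(S_1\cup\{Q\})=n-1$. Replacing your last two paragraphs by this observation closes the gap; as written, your proof of (\ref{b}) is incomplete.
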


\begin{proof} From Corollary \ref{c1} we immediately get that for all $P\in T_QX\backslash X$ the $X$-rank of $P$ is at least $n-2$. Hence to prove part (\ref{a}) it is sufficient to find a point $P\in T_QX\backslash X$ such that $r_X(P)\le n-2$. Set 
$$R:= \mathcal {O}_X(1)(-2Q)$$
and  
$$M:= R\otimes \omega _X^\ast . $$ 
Since $\deg (R)=n \ge 2p_a(X)+1$, then $R$ is very ample and $h^0(X,R) = n-1$. 
\\
Let $\varphi _R: X \hookrightarrow \mathbb {P}^{n-2}$ be the embedding induced by $\vert R\vert$. Notice that $\varphi _R(X)$ is obtained projecting $X$ from the line $T_QX$. Since $\deg (M)=n-2 \ge p_a(X)+1$, we have $h^0(X,M)\ge 2$. 

Since $\deg (M)=n-2$, then $M$ is spanned if $n-2\ge 2p_a(X)$, then if $n\ge 6$. We distinguish two cases: $M$ spanned and $M$ not spanned.

\quad (i) First assume that $M$ is spanned, and hence that $n\ge 6$. Obviously $\vert M \vert$ contains at least a reduced element $A\in \vert M\vert$. Now, since $h^0(X,R(-A)) = h^0(X,\omega _X)=2$, then $\dim (\langle \varphi _R(A)\rangle )=n-4$. By definition of $\varphi_{|R|}$ the curve $\varphi _R(X)$ is the linear projection of $X$ from $T_QX$, then $\dim (\langle T_QX\cup A\rangle )\le n-2$. Since $\deg (\mathcal {O}_X(1)(-A)) =4>\deg (\omega_X)$, the set $A\in |M|$ is linearly independent in $\mathbb {P}^n$, i.e. $\dim (\langle A\rangle ) =n-3$. Since $\dim (\langle T_QX\cup \langle A\rangle )\le n-2$, we get $T_QX\cap \langle A\rangle \ne \emptyset$. If $T_QX \subset \langle A\rangle$, then $r_X(P)\le n-2$ for all $P\in T_QX$. Hence we may assume that $T_QX\cap \langle A\rangle$ is a unique point:  
$$T_QX\cap \langle A\rangle=P'.$$ 
If $P'\ne Q$, then $r_X(P')=n-2$ as required in part (\ref{a}) of the statement. 
\\
If $P'=Q$ (i.e. if $Q\in \langle A\rangle$), then $Q$ has actually to belong to $A$ itself, in fact  $h^1(X,\mathcal {O}_X(1)(-Z))=0$ for every zero-dimensional scheme of $X$ with degree $\le n-1$. However, since $R$ is assumed to have no base points, we may always take $A\in |M|$ such that $Q\notin A$.

\quad (ii) Now assume that $M$ is not spanned,  hence $n=5$ and $R = \omega _X(B)$ for some $B\in X$. Since $\omega _X$ is spanned, there is a reduced $A'\in \vert \omega _X\vert$ such that $Q\notin A'$ and $B\notin A'$. Therefore $A:= B+A'$ is a reduced element of $\vert M\vert$. We may use step (i) to prove the part (\ref{a}) of the statement even in this case, unless $B=Q$ (but this is exactly the case excluded).

We can now prove part (\ref{b}).  Take any $P\in T_QX\backslash X$. By part (\ref{a}) there are $P_1\in T_QX\backslash X$ and $S_1\subset X$ such that $\sharp (S_1)=n-2$ and $P_1\in \langle S_1\rangle$. Since $P\in T_QX = \langle \{P_1,Q\}\rangle$, we have $P\in \langle \{Q\}\cup S_1\rangle$. Hence $r_X(P) \le n-1$.
\end{proof}

We can state the analogous of Remark \ref{f1}.

\begin{remark} Let $X$ be an abstract smooth curve of genus $2$. Every element of $\mbox{Pic}^7(X)$ is very ample. The algebraic set $\mbox{Pic}^7(X)$ is isomorphic to a $2$-dimensional abelian variety $\mbox{Pic}^0(X)$. A one-dimensional closed subset of it (isomorphic to $X$) parametrizes the set $\Sigma$ all line bundles of the form $\omega _X^{\otimes 2}(3O)$ for some $O\in X$. Fix $L\in \Sigma$. Since the $3$-torsion of $\mbox{Pic}^0(X)$ is formed by $3^4$ points, there are exactly $3^4$ points $O\in X$ such that $L\cong \omega _X^{\otimes 2}(3O)$.
\end{remark}

We prove here the Theorem \ref{a2} stated in the Introduction that gives the precise value $r_X(P)=n-2$ for points $P\in\tau(X)\setminus X$ if $n\geq 8$.
\\
\\
{\emph{Proof of Theorem \ref{a2}}.}
\\
\\
By Corollary \ref{c1}, the $X$-rank of $P$ is  $r_X(P) \ge n-2$. We prove the reverse inequality.
Consider the linear projection $\ell_{P} :\PP n\setminus P \to \PP {n-1}$ of $\PP n$ to $\PP {n-1}$ from $P\in \PP n$ and set: 
$$Y:= \ell _P(X)\subset \PP {n-1}$$
and 
$$O:= \ell _P(Q)\in \PP {n-1}$$ 
to be the linear projections via $\ell_{P}$ of $X\subset \PP n$ and $Q\in \PP {n}$ respectively.
For every $0$-dimensional subscheme $Z\subset X$ of length at most $4$ we have that $\dim (\langle Z\rangle )=\mbox{length}(Z)-1$, hence $\ell _P\vert X\backslash \{Q\}$ is an embedding, the curve $Y$ is singular only in $O$ where there is a cusp and the embedding $Y\subset \mathbb {P}^{n-1}$ is linearly normal. Set:
$$R:= \mathcal {O}_Y(1)\otimes \omega _Y^\ast.$$ 
Since $p_a(Y)=3$ and $\deg (R) = n-2 \ge 6$, then $R$ is spanned. Hence a general divisor $B\in \vert R\vert$ is reduced and does not contain $O$. Therefore there is a unique set of points $S\subset X$ such that $\sharp (S) = \sharp (B)$ and $\ell _P(S)=B$. Since $h^1(Y,\mathcal {O}_Y(1)(-B)) = h^1(Y,\omega _Y)=1$, we have $\dim (\langle B\rangle ) = \sharp (B)-2$. Hence $\dim (\langle \{P\}\cup S\rangle ) = n-3$. In order to get $P\in \langle S\rangle$, and hence $r_X(P) \le n-2$, it is sufficient to prove that $S$ is linearly independent. This is true, because $X$ is linearly normal and $\deg (\mathcal {O}_X(1)(-S)) =4 > 2 = \deg (\omega _X)$. \qed

For the next proposition we need to recall the definition of $X$-rank of subspaces.

\begin{definition} Let $V \subset \PP n$ be a non-empty linear subspace. The $X$-rank $r_X(V)$
of $V$ is the minimal cardinality of a finite set $S\subset X$ such that $V \subseteqq < S >$.
\end{definition}

\begin{proposition}\label{a4}
Fix an integer $n \ge 4$. Let $X \subset \mathbb {P}^n$ be a non-degenerate, smooth and linearly normal curve of genus $2$ and degree $n+2$. Fix $Q\in X$ and let $\Delta _Q$ be the set of all $S\subset X$ such that $T_QX \subset \langle S\rangle$ and $\sharp (S) = r_X(T_QX)$. Then:
\begin{itemize}
\item[(i)]$r_X(T_QX)=n-1$;
\item[(ii)] every $S\in \Delta _Q$ contains $Q$ and $\{S\backslash \{Q\}\}_{S\in \Delta _Q}$ is the non-empty open subset of the projective space $\vert \mathcal {O}_X(1)(-2Q)\otimes \omega _X^\ast
\vert$ parameterizing the reduced divisors not containing $Q$. 
\end{itemize}
\end{proposition}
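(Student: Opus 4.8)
The plan is to compute $r_X(T_QX)$ by the same projection-and-adjunction strategy used in the proofs of Corollary \ref{c1} and Theorem \ref{a2}, but now asking for a linear space $\langle S\rangle$ that contains the whole tangent line $T_QX$, not merely a point of it. First I would establish the lower bound $r_X(T_QX)\ge n-1$. If $S\subset X$ is a finite set with $T_QX\subset\langle S\rangle$, then $\langle S\rangle$ is a hyperplane or smaller; any hyperplane $H\supset T_QX$ cuts on $X$ a divisor of degree $n+2$ containing $2Q$ as a fixed part (because $\mathrm{length}(T_QX\cap X)=2$ by Lemma \ref{a1}, since $\deg X=n+2=4+2p_a(X)-2$ forces $h^1(X,\mathcal O_X(1)(-2Q))=0$), hence $\langle S\rangle\cap X$ contains $2Q$ plus a divisor $D'$ of degree $n$ moving in the linear system $\vert\mathcal O_X(1)(-2Q)\vert$, which is $n-1$-dimensional; so $\langle S\rangle$ already has dimension $\ge n-2$ forced by its intersection with $X$, and one checks that $S$ must have at least $n-1$ points because a set of $n-2$ points spans at most a $\mathbb P^{n-3}$, too small to contain a hyperplane section structure of this type — more cleanly, $r_X(T_QX)\ge r_X(P)=n-2$ for a general $P\in T_QX$ by Theorem \ref{a2} (when $n\ge 8$) and Corollary \ref{c1} in general, and a genuine strict inequality comes from the fact that $\langle S\rangle$ must contain the \emph{special} point $Q\in T_QX$ as well, which is analyzed in part (ii).

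For the upper bound $r_X(T_QX)\le n-1$ I would set $R:=\mathcal O_X(1)(-2Q)\otimes\omega_X^\ast$, a line bundle of degree $(n+2)-2-2=n-2$. Since $n-2\ge 2p_a(X)-1=3$ exactly when $n\ge 5$ (and the case $n=4$ is handled by the explicit Propositions \ref{grado6} and \ref{p3}), $R$ is spanned, so a general $B\in\vert R\vert$ is reduced and avoids $Q$. Then $S:=\{Q\}\cup B$ has $\sharp(S)=1+h^0(X,R)-1=h^0(X,R)$; since $\deg(\mathcal O_X(1)(-2Q)(-B))=\deg\omega_X=2$ and $h^1(X,\omega_X)=1$, the divisor $2Q+B$ imposes one condition less than expected on hyperplanes, so $\dim\langle 2Q+B\rangle = \deg(2Q+B)-2 = (n-2+2)-2 = n-2$; because $2Q$ is the scheme-theoretic intersection $T_QX\cap X$ we get $T_QX\subset\langle 2Q+B\rangle=\langle S\rangle$. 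Finally $\sharp(S)=h^0(X,R)$: by Riemann–Roch, $h^0(X,R)=\deg R+1-p_a(X)+h^1(X,R)=(n-2)+1-2+h^1(X,R)$, and $h^1(X,R)=h^0(X,\mathcal O_X(1)^\ast\otimes\omega_X^{\otimes2}(2Q))=0$ for degree reasons, giving $h^0(X,R)=n-1$. Hence $r_X(T_QX)\le n-1$, and combined with the lower bound this proves (i).

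For part (ii), I would argue that if $S\in\Delta_Q$ and $Q\notin S$, then $\langle S\rangle$ is a hyperplane containing $T_QX$, so its intersection with $X$ is $2Q+D'$ with $D'\in\vert\mathcal O_X(1)(-2Q)\vert$ reduced and $\langle S\rangle=\langle 2Q+D'\rangle$; but since $Q\notin S$ and $S$ spans $\langle S\rangle$, the only way $T_QX\subset\langle S\rangle$ is if $S$ contains enough of $D'$ to reconstruct the embedded tangent direction at $Q$, which is impossible from points of $X$ not equal to $Q$ — more precisely, $h^1(X,\mathcal O_X(1)(-S))$ would have to compensate, forcing $\deg(\mathcal O_X(1)(-S))\le\deg\omega_X=2$, i.e. $\sharp(S)\ge n$, contradicting $\sharp(S)=n-1$. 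Thus $Q\in S$ for every $S\in\Delta_Q$, and then $S\setminus\{Q\}$ is a reduced divisor of degree $n-2$ with $2Q+(S\setminus\{Q\})$ spanning a hyperplane through $T_QX$ and imposing $n-1$ conditions, which (by the same $h^1$ computation as above) forces $S\setminus\{Q\}\in\vert\mathcal O_X(1)(-2Q)\otimes\omega_X^\ast\vert=\vert R\vert$; conversely every reduced $B\in\vert R\vert$ with $Q\notin B$ gives an $S\in\Delta_Q$ by the construction in part (i). The main obstacle I anticipate is the careful bookkeeping in the lower bound and in (ii): one must rule out the possibility that a clever non-reduced or $Q$-avoiding configuration of $n-2$ points could still span a space containing the embedded tangent line, and the cleanest route is the dimension count $\langle S\rangle\cap X\supseteq 2Q+D'$ together with the sharp value $h^1(X,\mathcal O_X(1)(-2Q))=0$ coming from Lemma \ref{a1}, which pins down $\dim\langle S\rangle$ and hence $\sharp(S)$ exactly.
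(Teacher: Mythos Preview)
Your overall strategy matches the paper's: both arguments hinge on the line bundle $R=\mathcal O_X(1)(-2Q)\otimes\omega_X^\ast$ for the upper bound and for the description of $\Delta_Q$, and both use linear-independence/$h^1$ bookkeeping for the lower bound and for forcing $Q\in S$. The paper obtains the upper bound by quoting Proposition~\ref{z1}(a) (a point $P\in T_QX$ with $r_X(P)=n-2$, then adjoin $Q$), whereas you build $S=\{Q\}\cup B$ with $B\in|R|$ directly; this is the same construction, and in fact the paper uses exactly your argument for the converse direction of (ii).

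There is, however, a genuine gap in your lower bound/part~(ii). You write that ``$h^1(X,\mathcal O_X(1)(-S))$ would have to compensate'', forcing $\sharp(S)\ge n$. But with $\sharp(S)=n-1$ and $Q\notin S$ one has $\deg\mathcal O_X(1)(-S)=3>\deg\omega_X$, so $h^1(\mathcal O_X(1)(-S))=0$ automatically and your inequality never fires. The object that must be examined is the \emph{scheme} $2Q\cup S$: the inclusion $T_QX\subset\langle S\rangle$ says $\langle 2Q\cup S\rangle=\langle S\rangle$, which has dimension at most $\sharp(S)-1$, while $\mathrm{length}(2Q\cup S)=\sharp(S)+2$ (resp.\ $\sharp(S)+1$) when $Q\notin S$ (resp.\ $Q\in S$). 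Thus one needs $h^1(\mathcal O_X(1)(-2Q-S))\ge 2$ (resp.\ $\ge 1$), and a degree check on $\omega_X$ rules this out for $\sharp(S)\le n-2$ in both cases and for $\sharp(S)=n-1$ with $Q\notin S$. This is exactly how the paper argues: it sets $M=\langle S\rangle$, observes $\mathrm{length}(X\cap M)\ge\dim M+3$, and invokes the fact that every length $\le n-1$ subscheme of $X$ is linearly independent (and every length~$n$ subscheme is, unless it lies in $|\mathcal O_X(1)\otimes\omega_X^\ast|$). Two smaller slips: $\langle S\rangle$ is a $\mathbb P^{n-2}$, not a hyperplane; and $h^0(X,R)=n-3$, not $n-1$ (your conclusion $\sharp(S)=n-1$ is still correct, since $\sharp(S)=1+\deg B=1+(n-2)$). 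Also, the spannedness criterion for $R$ is $\deg R\ge 2g=4$, i.e.\ $n\ge 6$; for $n=5$ the bundle $R$ has degree~$3$ and always has a base point, so one must argue (as in the proof of Proposition~\ref{z1}) that a general $B\in|R|$ is nevertheless reduced and avoids $Q$ away from the exceptional embedding $\mathcal O_X(1)\cong\omega_X^{\otimes 2}(3Q)$.
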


\begin{proof}
By item (\ref{a}) in Proposition \ref{z1},  there is $P\in T_QX\backslash X$ such that $r_X(P)=n-2$. Take $S_1\subset X$ computing $r_X(P)$. Since $T_QX \subset \langle \{Q\}\cup S_1\rangle$, we get $r_X(T_QX) \le n-1$. Hence to prove (i) it is sufficient to prove the reverse inequality.

Fix a finite subset of points $S\subset X$ computing $r_X(T_QX)$, i.e. $T_QX\subset \langle S \rangle$ and it does not exist any $\PP t$ with $t< \dim (\langle S \rangle)$ containing $T_QX$. Here we prove $\sharp (S)\ge n-1$ and that if $\sharp (S)=n-1$, then $Q\in S$. 

Assume either $\sharp (S) \le n-2$ or $\sharp (S) =n-1$ and $Q\notin S$. Hence in both cases it is possible to find a projective linear subspace $M \subset \mathbb {P}^n$ such that $\dim (M) \le n-2$ and $\mbox{length}(X\cap M) \ge \dim (M)+3$.
So if $\sharp (S) \le n-2$ or $\sharp (S) =n-1$ and $Q\notin S$ we are able to find a scheme $X\cap M\subset X$ of length greater than $n$ that is linearly independent; but this is not possible, in fact, since $\deg (\mathcal {O}_X(1)) =n+\deg (\omega _X)$ and $X\subset \PP n$ is linearly normal, a zero-dimensional  subscheme $Z \subset X$ is linearly independent
if either $\mbox{length}(Z) \le n-1$ or $\mbox{length}(Z) = n$ and $Z\notin \vert \mathcal {O}_X(1)\otimes \omega _X^\ast \vert$ (if $Z\in \vert \mathcal {O}_X(1)\otimes \omega _X^\ast \vert$, then $\dim (\langle Z\rangle )=\mbox{length}(Z)-2$, because $h^1(X,\omega _X)=1$).
\\
Hence we get that $r_X(T_QX)=n-1$ that proves the first part of the statement. Moreover this also shows that every $S\backslash \{Q\}$ is a reduced element of $\vert \mathcal {O}_X(1)(-2Q)\otimes \omega _X^\ast \vert$. Conversely, fix a reduced $B \in \vert \mathcal {O}_X(1)(-2Q)\otimes \omega _X^\ast \vert$ not containing $Q$ and set $E:= B\cup \{Q\}$. Notice that $\omega _X(Q)$ has $Q$ as its base-points and that $\mathcal {O}_X(1)(-E) \cong \omega _X(Q)$. Hence $\langle E\rangle \cap X$ contains $Q$ with multiplicity at least $2$. Thus $T_QX\subset \langle E \rangle$, that concludes the proof of the second part of the statement.
\end{proof}

\begin{remark} 
Observe that the space $\Delta _Q$ of  Proposition \ref{a4}  has dimension $n-3$ if $n\ge 5$.
\end{remark}

\section{Questions}\label{questions}

We end the paper with a number of progressive questions that should give a line for further investigations on the $X$-rank of points in $\PP n$ with respect to linearly normal curves of genus $g$ and degree $n+g$.

A first question is on the possible sharpness of the bound given in Theorem \ref{a2} for the dimension $n$ of the ambient space. Clearly Theorem \ref{a2} cannot hold for any $n\geq 3$ because we know that it is false for $n=3,4$ (by propositions \ref{p2.0} and \ref{grado6}), but it can maybe be extended to $n\geq 7$.

\begin{question}\label{qq1} Let $X \subset \PP n$ be a genus $2$ linearly normal curve of degree $n+2$. Is it possible to prove that if $n\ge 7$ then the $X$-rank of any point $P\in \PP n$ is at most
$n-2$?
\end{question}

Next question comes up from the fact that, in all the examples that we have studied in this paper, the $X$-rank with respect to a smooth genus $2$ linearly normal  curve $X\subset \PP n$, the highest value of the $X$-rank is realized on points belonging to the tangential variety to $X$. 

\begin{question}\label{u1} Let $X \subset \PP n$ be a genus $2$ linearly normal curve of degree $n+2$. Does it exist a positive
integer
$n_0\in \mathbb{N}$ such that for any $n\geq n_0$ every point $P\in \PP n\setminus \tau(X)$ have $X$-rank less or equal than
$n-3$?
\end{question}

Actually Question \ref{u1} can be generalized to any $s$-th secant variety $\sigma_s(X)\subset \PP n$ for the same $X$ linearly normal genus $2$ curve.

\begin{question}\label{qq3} Let $X \subset \PP n$ be a genus $2$ linearly normal curve of degree $n+2$. Is the maximal $X$-rank $s$ of a point $P\in \PP n$ realized on $\sigma_{n-s}(X)$
when $n \gg s$?
\end{question}

All the above questions can be formulated in an analogous way for any projective, smooth and genus $g$ linearly normal curve.

\begin{question}\label{q1}
Fix an integer $g \ge 0$. Are there integers $n_g,m_g \ge 2g+3$  such that for every integer $n \ge n_g$ (resp. $n \ge m_g$), every smooth genus $g$ curve $Y$ and every linearly normal embedding $j: Y \hookrightarrow \mathbb {P}^n$, we have $r_{j(Y)}(P)
\le n-g$ for all $P\in \mathbb {P}^n$ (resp. $r_{j(Y)}(P)
\le n-g-1$ for all $P\in \mathbb {P}^n \setminus Tj(Y)$)?
\end{question}

In the set-up of Question \ref{q1} we have $r_X(P) \ge n-g$ for every $P\in TX\backslash X$ by Corollary 1.

\begin{question}\label{q2}
Take the set-up of Question \ref{q1}, but assume $g \ge 3$. Is it possible to find integers $n'_g$ and $m'_g$ as in Question \ref{q1} (but drastically lower)
such that the same statements holds for $n \ge n'_g$ and $n \ge m'_g$ if we make the further assumption that $Y$ has general moduli?
\end{question}

Hint: in the set-up of Question \ref{q2} in the first non-trivial case $g=3$ perhaps it is sufficient to distinguish between hyperelliptic curves and non-hyperelliptic curves.

\providecommand{\bysame}{\leavevmode\hbox to3em{\hrulefill}\thinspace}

\end{document}